\newtheorem{thm}{Theorem}[section]
\newtheorem{prop}[thm]{Proposition}
\newtheorem{lem}[thm]{Lemma}
\newtheorem{defn}[thm]{Definition}
\theoremstyle{definition}
\newcommand{\comment}[1]{}
\numberwithin{equation}{section}
\theoremstyle{definition}
\begin{document}
\title[ Hardy-Fofana spaces and temperature equation]{New characterization of Hardy-Fofana spaces and temperature equation}
\author[M. A. Dakoury]{Martial Agbly Dakoury}
\address{Laboratoire de Math\'ematiques et Applications, UFR Math\'ematiques et Informatique, Universit\'e F\'elix Houphou\"et-Boigny Abidjan-Cocody, 22 B.P 582 Abidjan 22. C\^ote d'Ivoire}
\email{{\tt dakourymartial@gmail.com}}
\author[J. Feuto]{Justin Feuto}
\address{Laboratoire de Math\'ematiques et Applications, UFR Math\'ematiques et Informatique, Universit\'e F\'elix Houphou\"et-Boigny Abidjan-Cocody, 22 B.P 1194 Abidjan 22. C\^ote d'Ivoire}
\email{{\tt justfeuto@yahoo.fr}}

\subjclass{42B30, 42B20, 46E30, 42B35}
\keywords{Amalgam spaces, Hardy-Amalgam spaces, Generalized Hardy-Morrey spaces, Calder\'on-Zygmund operators,  Molecular decomposition.}

\date{}
\begin{abstract}
The aim of this paper is  to give a characterization of   Hardy-Fofana spaces  via Riesz trasforms. This characterization allow us to  describ the distributions belonging to these spaces  as a bounded solutions of Cauchy-Riemann's general temperature  equations.
\end{abstract}

\maketitle

\section{Introduction}
\label{sec:1}
Let $\mathbb R^d$ ($d$ is a positive integer) be the Euclidean space of dimension $d$  equipped  with the Lebesgue measure $dx$ and the Euclidean norm.  The classical Hardy space $\mathcal H^p(\mathbb R^d)$ $(0<p<\infty)$ is defined as the space of tempered distributions $f$ satisfying 
$\Vert \mathcal Mf\Vert_p <\infty,$ where the maximal function $\mathcal Mf$ is defined by
\begin{equation}
\mathcal Mf(x)=\sup_{t>0}\vert (f\ast \varphi_t)(x)\vert,\label{maxf}
\end{equation}
with $\varphi$ in the Schwartz class $\mathcal S(\mathbb R^d)$ having non vanish integral, and $\varphi_t(x)=t^{-d}\varphi(t^{-1}x)$. 

It is well  known that not only  this  space does not depends on $\varphi$, but one can replaced  Schwartz function by  Poisson kernel   in the definition of the maximal function (\ref{maxf}).

In \cite{AbFe}, Ablé and the second author studied  Hardy-amalgam spaces  $\mathcal H^{(p,q)}(\mathbb R^d)$ ($0<p,q<\infty$) by taking in the above maximal characterization of classical Hardy space  the Wiener amalgam quasi-norm $\left\|\cdot\right\|_{p,q}$ instead of Lebesgue's. 

 A locally integrable function $u$ belongs to the amalgam space $(L^p,\ell^q)(\mathbb R^d)$ if 
$$\left\|u\right\|_{p,q}:=\left[ \sum_{k\in\mathbb Z^d}\Vert u\chi_{Q_k}\Vert_{p}^q\right]^{\frac{1}{q}}<\infty,$$
where for $k\in\mathbb Z^d$, $Q_k=k+\left[ 0,1\right) ^d$ and $\chi_{Q_k}$ stands for the characteristic function of $Q_k$. 

Multiple characterizations of $ \mathcal H^ {(p,q)} (\mathbb R^d) $ spaces including atomic and Poisson kernel characterization, were given in \cite{AbFe}. We notice that the atoms in this context are exactely the one used in classical Hardy space. 

Recently, Assaubay et al in \cite{AjCf} characterized this spaces by using first-order classical Riesz transforms and composition of first-order Riesz transformations. They also describe the distributions in $\mathcal H^{(p,q)}(\mathbb R^d)$ as the boundary values of solutions of harmonic and caloric Cauchy-Riemann systems. Here we intend to prove that similar characterizations are possible in the context of Hardy-Fofana spaces.


It is well known that 
 for $0<p,\alpha,q<\infty$ and $r>0$, there exists a constant $C_{r;\alpha}>0$ depending on $r$ and $\alpha$ such that  \begin{equation}
 C_{r;\alpha}^{-1}\Vert u\Vert_{p,q}\leq \Vert St^{\alpha}_{r} u\Vert_{q,p}\leq C_{r;\alpha}\Vert u\Vert_{p,q},\qquad u\in (L^p,\ell^q)(\mathbb R^d),\label{contr2}
 \end{equation}
  where  
 $(St^{\alpha}_{r} u)(x)= r^{-\frac{d}{\alpha}} u (r^{-1} x)$.  It follows from the above relation that for $u \in (L^p, \ell^q)(\mathbb R^d)$, we have $\mathrm{St}^\alpha_r u \in (L^p, \ell^q)(\mathbb R^d)$ for $\alpha > 0$ and $r > 0$. Unfortunately, the family $\left\lbrace \mathrm{St}^\alpha_ru\right\rbrace _{r > 0}$ is not bounded in $(L^p, \ell ^q)(\mathbb R^d)$.
Ibrahim Fofana considered in \cite{Fo1}, the spaces $(L^p,\ell^q)^\alpha(\mathbb R^d)$ defined for $0<p,q,\alpha\leq\infty$ by
$$(L^p,\ell^q)^\alpha(\mathbb R^d)=\left\{f\in(L^p,\ell^q)(\mathbb R^d)/\left\|f\right\|_{p,q,\alpha}<\infty\right\}$$
where 
\begin{equation}
\left\|f\right\|_{p,q,\alpha}:=\sup_{r>0}\left\|St^{\alpha}_{r}f\right\|_{p,q}.\label{normqpa}
\end{equation} 
 These spaces known as  Fofana's spaces  are non trivial if and only if $p\leq \alpha\leq q$ (see \cite{Fo1}). In the rest of the paper  we will always assume that this condition is fulfilled. It is proved in \cite{Fe1} that for $u\in(L^p,\ell^q)^\alpha(\mathbb R^d)$, we have $\Vert St^{\alpha}_ru\Vert_{p,q,\alpha}=\Vert u\Vert_{p,q,\alpha}$ and that $(L^p,\ell^q)^\alpha(\mathbb R^d)$  ($1\leq p\leq \alpha \leq q$) is the biggest norm space which is continuously embedded in $(L^p,\ell^q)(\mathbb R^d)$ and for which the translation $\mathrm{St}^\alpha_r$ is an isometry. 
 These spaces can also be viewed as some generalized Morrey spaces since for $p <\alpha$, the space $(L^p,\ell^\infty)^\alpha(\mathbb R^d)$ is exactly the classical
Morrey space $L^{p, d\frac{p}{\alpha}}(\mathbb R^d)$. 

For $0<p\leq \alpha\leq q<\infty$, Hardy-Fofana space $\mathcal H^{(p,q,\alpha)}(\mathbb R^d)$, introduced by the authors in \cite{DmFe}, is a subspace of Hardy-amalgam spaces consists of tempered distributions $f$ satisfying 
$$\Vert f\Vert_{\mathcal H^{(p,q,\alpha)}}:=\Vert \mathcal M f\Vert_{p,q,\alpha}<\infty.$$
The purpose of this article is twofold. We first  characterize these spaces via Riesz transforms  and secondly, we describe the distributions belonging to these spaces as bounded solutions of certain general temperature equations of CAUCHY-RIEMANN.

This paper is organized as follow:

The next Section is devoted to the prerequisites on Hardy-Fofana spaces. In Section 3, we give the characterizations of Hardy-Fofana spaces with Riesz transforms. In the last section, we characterize distributions belonging to our spaces as bounded solutions of certain general temperature equations of CAUCHY-RIEMANN.

In this work, $\mathcal S := \mathcal S(\mathbb R^{d})$ will denote the Schwartz class of rapidly decreasing smooth functions equipped with its usual topology. The dual space of $\mathcal S$ is the space of tempered distributions denoted by $\mathcal S' := \mathcal S'(\mathbb R^{d})$. The pairing between $\mathcal S'$ and $\mathcal S$ is denoted by $\left\langle \cdot,\cdot\right\rangle$. 

We denote by  $\left|E\right|$, the Lebesgue measure of a measurable subset $E$ of $\mathbb R^d$.
The notation $A\approx B$ means that there exist two constants  $0<C_{1}$ and $0<C_{2}$ such that $A\leq C_{1} B$ and  $B\leq C_{2}A$, while $A:= B$ means that $B$ is the definition of $A$.

\section{Prerequisites for Hardy-Fofana spaces}\label{prerequisite}
Fofana's spaces have among others, the following properties (see for example \cite{Fe1} and \cite{Fo1}):
\begin{enumerate} 
\item let $0<p,\alpha,q\leq \infty$. The space $\left((L^p,\ell^q)^\alpha(\mathbb R^d),\left\|\cdot\right\|_{p,q,\alpha}\right)$ 
is a Banach space if $1\leq p\leq \alpha\leq q$ and a quasi-Banach space if $0<p<1$;\label{rfof1}
\item if $\alpha\in \left\{p,q\right\}$ then $(L^p,\ell^q)^\alpha(\mathbb R^d)=L^\alpha(\mathbb R^d)$  with equivalent norms;\label{rfof2}
\item if $p<\alpha<q$ then $L^\alpha(\mathbb R^d)\subsetneq (L^p,\ell^q)^\alpha(\mathbb R^d)\subsetneq (L^p,\ell^q)(\mathbb R^d)$; 
\item let $f$ and $g$ be two measurable functions on $\mathbb R^d$. If 
$\vert f\vert\leq \vert g\vert$, then $\Vert f\Vert_{p,q,\alpha}\leq \Vert g\Vert_{p,q,\alpha}$.\label{rfof4}
\end{enumerate}

For many operators including the maximal Hardy-Littlewood operator, norm inequalities are given in these spaces for $1\leq p\leq \alpha\leq q$.

Let $f$ be a locally integrable function and  $\mathfrak{M}(f)$ be the centered  Hardy-Littlewood maximal function defined by 
 $$\mathfrak{M}(f)(x):=\sup_{r>0}|B(x,r)|^{-1}\int_{B(x,r)}|f(y)|dy,\ \forall\ x\in\mathbb{R}^{d}.$$ 
It is proved in \cite[Proposition 4.2]{Fe1} that $\mathfrak{M}$ is bounded on $(L^p,\ell^q)^\alpha(\mathbb R^d)$, whenever $1<p\leq\alpha\leq q\leq\infty$. 
Using \cite[Proposition 11.12]{LSUYY}, it is easy to extablish the following result whose proof is omitted.

\begin{prop}\label{operamaxalvec}
Let $1<p\leq\alpha\leq q<+\infty$ and $1<u\leq+\infty$. For all sequences  $\left\{f_n\right\}_{n\geq0}$ of measurable functions, we have 
$$\left\|\left(\sum_{n\geq0}|\mathfrak{M}(f_n)|^{u}\right)^{\frac{1}{u}}\right\|_{p,q,\alpha }\approx\left\|\left(\sum_{n\geq0}|f_n|^u\right)^{\frac{1}{u}}\right\|_{p,q,\alpha },$$ 
with the equivalence constants not depending on the sequence  $\left\{f_n\right\}_{n\geq0}$.
\end{prop}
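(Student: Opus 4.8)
The plan is to deduce Proposition~\ref{operamaxalvec} from the Fefferman--Stein vector-valued maximal inequality in the scale of Fofana spaces, exactly as the scalar statement \cite[Proposition 4.2]{Fe1} is upgraded by \cite[Proposition 11.12]{LSUYY}. First I would record the two halves of the claimed equivalence separately. The inequality
$$\left\|\left(\sum_{n\geq 0}|f_n|^u\right)^{1/u}\right\|_{p,q,\alpha}\leq C\left\|\left(\sum_{n\geq 0}|\mathfrak{M}(f_n)|^u\right)^{1/u}\right\|_{p,q,\alpha}$$
is the trivial direction: pointwise $|f_n(x)|\leq \mathfrak{M}(f_n)(x)$ for a.e.\ $x$ (by Lebesgue differentiation at Lebesgue points, up to modifying $f_n$ on a null set), hence $\left(\sum_n|f_n|^u\right)^{1/u}\leq \left(\sum_n|\mathfrak{M}(f_n)|^u\right)^{1/u}$ pointwise, and property~(\ref{rfof4}) of Section~\ref{prerequisite} gives the norm inequality with constant $1$. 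So the content is the reverse inequality.

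For the reverse inequality, the mechanism is the one isolated in \cite[Proposition 11.12]{LSUYY}: a ball quasi-Banach function space $X$ on which the Hardy--Littlewood maximal operator $\mathfrak{M}$ is bounded automatically satisfies the vector-valued bound
$$\left\|\left(\sum_{n\geq 0}|\mathfrak{M}(f_n)|^u\right)^{1/u}\right\|_{X}\leq C\left\|\left(\sum_{n\geq 0}|f_n|^u\right)^{1/u}\right\|_{X}$$
for every $1<u\leq\infty$, via an extrapolation / Rubio de Francia iteration argument that only uses the scalar bound on $X$ (and, for $u<\infty$, the scalar bound on a suitable associate or rescaled version). The key steps are therefore: (i) verify that $\big((L^p,\ell^q)^\alpha(\mathbb R^d),\|\cdot\|_{p,q,\alpha}\big)$ is a ball quasi-Banach function space in the sense required by \cite{LSUYY} --- this follows from properties (\ref{rfof1}) and (\ref{rfof4}) together with the elementary fact that $\chi_{B}\in(L^p,\ell^q)^\alpha$ with finite norm for every ball $B$; (ii) invoke \cite[Proposition 4.2]{Fe1} to get $\mathfrak{M}:(L^p,\ell^q)^\alpha\to(L^p,\ell^q)^\alpha$ bounded for $1<p\leq\alpha\leq q\leq\infty$, which supplies the scalar hypothesis of \cite[Proposition 11.12]{LSUYY}; (iii) apply that proposition with $X=(L^p,\ell^q)^\alpha(\mathbb R^d)$ and the given $u$ to obtain the vector-valued inequality; (iv) combine with the trivial direction to get the stated two-sided equivalence, noting that all constants come from \cite[Proposition 4.2]{Fe1} and the abstract extrapolation, hence are independent of $\{f_n\}_{n\geq 0}$.

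The only genuine point needing care --- and the step I expect to be the main obstacle --- is the verification in (i) that $(L^p,\ell^q)^\alpha(\mathbb R^d)$ fits the axioms of a ball (quasi-)Banach function space as formulated in \cite{LSUYY}: one must check the lattice property (already given by (\ref{rfof4})), the Fatou-type property (lower semicontinuity of $\|\cdot\|_{p,q,\alpha}$ under monotone a.e.\ limits, which reduces via \eqref{normqpa} to the corresponding property of $\|\cdot\|_{p,q}$ and hence of $\|\cdot\|_p$ and the $\ell^q$-sum), and the normalization that characteristic functions of balls have finite quasi-norm and that $X$ embeds into $L^1_{\mathrm{loc}}$ --- the latter is immediate from property (\ref{rfof2})--(\ref{rfof4}), since $(L^p,\ell^q)^\alpha\hookrightarrow(L^p,\ell^q)\hookrightarrow L^1_{\mathrm{loc}}$. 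Once these routine verifications are in place, the proposition is a direct citation, which is why the proof is omitted in the text. $\Box$\medskip
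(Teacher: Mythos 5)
Your proposal is correct and follows essentially the same route the paper intends: the hard direction is obtained by feeding the scalar boundedness of $\mathfrak{M}$ on $(L^p,\ell^q)^\alpha(\mathbb R^d)$ from \cite[Proposition 4.2]{Fe1} into the abstract vector-valued maximal inequality of \cite[Proposition 11.12]{LSUYY}, after checking that the Fofana space satisfies the ball quasi-Banach function space axioms, and the easy direction follows from $|f_n|\leq\mathfrak{M}(f_n)$ a.e.\ together with property~(\ref{rfof4}). This is precisely the argument the paper omits, so no further comment is needed.
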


As Hardy-Fofana spaces are concerned, we have among others, the following properties which can be found in \cite{DmFe}. 
\begin{prop}Let $1 \leq p\leq\alpha\leq q < \infty$.
\begin{enumerate}
\item If $1<p$ then the space $\mathcal H^{(p,q,\alpha)}(\mathbb R^d)$ and $(L^{p},L^{q})^{\alpha}(\mathbb R^d)$ are equal with equivalence norms.
\item The space $\mathcal H^{(1,q,\alpha)}(\mathbb R^d)$ is continuously embedded in $(L^1,\ell^q)^{\alpha}(\mathbb R^d)$.
\end{enumerate}
\end{prop}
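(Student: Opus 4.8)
The plan is to derive both statements from two elementary facts about the maximal function $\mathcal M$ of \eqref{maxf}: a \emph{lower} pointwise bound $|f|\le C\,\mathcal M f$ valid almost everywhere for every $f\in\mathcal H^{(p,q,\alpha)}(\mathbb R^d)$ with $p\ge 1$, and an \emph{upper} pointwise bound $\mathcal M f\le C\,\mathfrak M f$ which, combined with the boundedness of $\mathfrak M$ on $(L^p,\ell^q)^\alpha(\mathbb R^d)$ for $p>1$ proved in \cite{Fe1}, supplies the converse inclusion in part (1).

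For the lower bound, first observe that if $f\in\mathcal H^{(p,q,\alpha)}(\mathbb R^d)$ then $\mathcal M f\in(L^p,\ell^q)^\alpha(\mathbb R^d)\subset(L^p,\ell^q)(\mathbb R^d)\subset L^1_{\mathrm{loc}}(\mathbb R^d)$, so in particular $\mathcal M f<\infty$ almost everywhere. By the standard distributional argument (cf. \cite{AbFe}), a tempered distribution whose radial maximal function is locally integrable is represented by a locally integrable function, and the mollifications $f\ast\varphi_t$ converge almost everywhere to $\bigl(\int_{\mathbb R^d}\varphi\bigr)f$ as $t\to 0$; hence $|f(x)|\le\bigl|\int_{\mathbb R^d}\varphi\bigr|^{-1}\mathcal M f(x)$ for a.e. $x$. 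Using the monotonicity property \eqref{rfof4} of $\|\cdot\|_{p,q,\alpha}$ we obtain $\|f\|_{p,q,\alpha}\le C\,\|\mathcal M f\|_{p,q,\alpha}=C\,\|f\|_{\mathcal H^{(p,q,\alpha)}}$. For $p=1$ this is precisely the continuous embedding asserted in part (2); for $p>1$ it is one of the two norm inequalities of part (1).

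For the upper bound, pick a radially decreasing $\psi\in L^1(\mathbb R^d)$ with $|\varphi|\le\psi$ (possible since $\varphi\in\mathcal S$); then for every $t>0$ and a.e. $x$ one has $|(f\ast\varphi_t)(x)|\le(|f|\ast\psi_t)(x)\le\|\psi\|_1\,\mathfrak M f(x)$, so $\mathcal M f\le C\,\mathfrak M f$ pointwise. When $p>1$, \cite[Proposition 4.2]{Fe1} gives $\|\mathfrak M f\|_{p,q,\alpha}\le C\,\|f\|_{p,q,\alpha}$, whence $\|\mathcal M f\|_{p,q,\alpha}\le C\,\|f\|_{p,q,\alpha}$ for every $f\in(L^p,\ell^q)^\alpha(\mathbb R^d)$. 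This shows $(L^p,\ell^q)^\alpha(\mathbb R^d)\hookrightarrow\mathcal H^{(p,q,\alpha)}(\mathbb R^d)$ and, together with the previous paragraph, completes part (1).

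The only delicate point is the claim used in the lower bound that a member of $\mathcal H^{(p,q,\alpha)}(\mathbb R^d)$ is genuinely a locally integrable function with $f\ast\varphi_t\to\bigl(\int_{\mathbb R^d}\varphi\bigr)f$ almost everywhere; for $p>1$ this is immediate from $\mathcal M f\in L^p_{\mathrm{loc}}(\mathbb R^d)$ and the classical theory, while for $p=1$ it still follows from $\mathcal M f\in L^1_{\mathrm{loc}}(\mathbb R^d)$. Everything else is a direct combination of pointwise inequalities with the lattice property \eqref{rfof4} and the maximal inequality of \cite{Fe1}, the standing hypothesis $p\le\alpha\le q$ being needed only to guarantee that the Fofana spaces in play are nontrivial.
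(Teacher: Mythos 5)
Your argument is correct, and it is the standard one: the paper itself does not prove this proposition but imports it from \cite{DmFe}, so there is no in-text proof to compare against. Your route --- the a.e.\ lower bound $|f|\le C\,\mathcal M f$ obtained from the weak convergence of the mollifications, the pointwise domination $\mathcal M f\le C\,\mathfrak M f$ via a radially decreasing integrable majorant, and then the lattice property of $\Vert\cdot\Vert_{p,q,\alpha}$ together with the boundedness of $\mathfrak M$ on $(L^p,\ell^q)^\alpha(\mathbb R^d)$ for $p>1$ from \cite[Proposition 4.2]{Fe1} --- is exactly the classical $\mathcal H^p=L^p$ ($p>1$), $\mathcal H^1\subset L^1$ argument transported to the Fofana norm, and all the ingredients you invoke are available in the paper's setting.
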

Notice that for $p<1$, we have as in the  classical Hardy and Hardy-amalgam spaces, that the spaces $\left( \mathcal H^{(p,q,\alpha)}(\mathbb R^d),\Vert\cdot\Vert_{\mathcal H^{(p,q,\alpha)}}\right) $ are quasi-Banach and for $f ,g \in\mathcal H^{(p,q,\alpha)}(\mathbb R^d)$,
$$ \left\|f +g\right\|^{p}_{\mathcal H^{(p,q,\alpha)}}\leq \left\|f\right\|^{p}_{\mathcal H^{(p,q,\alpha)}}+\left\|g\right\|^{p}_{\mathcal H^{(p,q,\alpha)}}.$$  
 We can also define (see \cite{DmF2}) these spaces as subspaces of Hardy-amalgam spaces for which the familly of dilations $\left\lbrace \mathrm{St}^\alpha_\rho\right\rbrace _{\rho>0}$ is locally bounded. 
 
 More precisely, for a tempered distribution $f$, $\rho>0$ and $\alpha$ two  real numbers we put 
$$\left\langle \mathrm{St}^\alpha_\rho f,\varphi\right\rangle :=\left\langle f,\mathrm{St}^{\alpha'}_{\rho^{-1}}\varphi\right\rangle ,$$
where 
$\frac{1}{\alpha'}+\frac{1}{\alpha}=1$. We have (see \cite{DmFe}) that 
for  $0 < p\leq\alpha\leq q\leq\infty$,  
\begin{equation}
\Vert f\Vert_{\mathcal H^{(p,q,\alpha)}}=\sup_{\rho>0}\Vert \mathrm{St}^\alpha_\rho f\Vert_{\mathcal H^{(p,q)}}.\label{deltachar}
\end{equation}
Just as Hardy-amalgam spaces was characterized in \cite{AbFe} with Poisson kernel, so are Hardy-Fofana's spaces. In fact, a tempered distribution $f$  belonging to Hardy-amalgam spaces is bounded; i.e  $f\ast \psi\in L^{\infty}(\mathbb{R}^{d})$ for all $\psi \in \mathcal{S}(\mathbb{R}^{d})$. A convolution of such distribution with integrable functions can be defined in term of distribution. More precisely, if $f\in\mathcal S'(\mathbb R^d)$ is bounded  and $u\in L^1(\mathbb R^d)$, then  the convolution $f\ast u$  is  defined as a tempered distribution acting on $\mathcal S(\mathbb R^d)$ by the pairing
 $$\left\langle f\ast u,\varphi\right\rangle :=\left\langle f\ast \tilde{\varphi},\tilde{u}\right\rangle_{(L^\infty,L^1)}\qquad\varphi\in\mathcal S(\mathbb R^d)$$
where $\tilde{u}(x)=u(-x)$ and $\left\langle f\ast \tilde{\varphi},\tilde{u}\right\rangle_{(L^\infty,L^1)}$ is the pairing between $L^\infty(\mathbb R^d)$ and $L^1(\mathbb R^d)$. But if we take as $u$ the Poisson kernel $P$
defined by
\begin{eqnarray*}
P(x):=\frac{\Gamma(\frac{d+1}{2})}{\pi^{\frac{d+1}{2}}}\frac{1}{(1+|x|^{2})^{\frac{d+1}{2}}}\;\;\;   x\in\mathbb{R}^{d},
\end{eqnarray*}
then $f\ast P_t$ can be identified for all $t>0$, to a well defined bounded function. As we can see for example in \cite{Graf},  there exist $\varphi,\psi\in\mathcal S(\mathbb R^d)$ such that $$f\ast P_t=(f\ast \varphi_t)\ast P_t+f\ast \psi_t\text{ for }t>0.$$
It is proved in \cite{AbFe}  that for an element $f\in\mathcal H^{(p,q)}(\mathbb R^d)$, we have 
\begin{equation}
\Vert x\mapsto\sup_{t>0}\sup_{\vert x-y\vert<t}\vert f\ast P_t(y)\vert\Vert_{p,q}\approx\Vert\mathcal Mf\Vert_{p,q}\label{Poissonhpq}
\end{equation}
where $\mathcal M f$ is the maximal function defined in Relation (\ref{maxf}). It follows that 
\begin{equation}
\Vert x\mapsto\sup_{t>0}\sup_{\vert x-y\vert<t}\vert f\ast P_t(y)\vert\Vert_{p,q,\alpha}\approx\Vert\mathcal Mf\Vert_{p,q,\alpha},
\end{equation}
thanks to Relations (\ref{deltachar}) and (\ref{Poissonhpq}) and the fact that $\mathrm{St}^\alpha_\rho$ commute with the maximal function $\mathcal M$.
 \begin{lem}\label{dilconv}
 Let $f\in\mathcal S'(\mathbb R^d)$, $\varphi\in\mathcal S(\mathbb R^d)$, $\rho$ and $\alpha$ positive real numbers. We have 
 $$\mathrm{St}^\alpha_\rho\left( f\ast\varphi_t\right) =\left( \mathrm{St}^\alpha_\rho f\right) \ast\varphi_{\rho t},\quad t>0.$$
 \end{lem}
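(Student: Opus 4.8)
The plan is to reduce everything to the definition of the dilation $\mathrm{St}^\alpha_\rho$ on tempered distributions via duality and then unwind the convolution. First I would fix $\varphi_t(x)=t^{-d}\varphi(t^{-1}x)$ and recall that for a test function $g\in\mathcal S(\mathbb R^d)$ one has $\mathrm{St}^{\alpha'}_{\rho^{-1}}g(x)=\rho^{d/\alpha'}g(\rho x)$, so that $\left\langle \mathrm{St}^\alpha_\rho f,g\right\rangle=\left\langle f,\mathrm{St}^{\alpha'}_{\rho^{-1}}g\right\rangle$. The identity to be proved is an equality of tempered distributions, so it suffices to test both sides against an arbitrary $g\in\mathcal S(\mathbb R^d)$.

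For the left-hand side I would write, for $g\in\mathcal S$,
\begin{align*}
\left\langle \mathrm{St}^\alpha_\rho\left(f\ast\varphi_t\right),g\right\rangle
&=\left\langle f\ast\varphi_t,\mathrm{St}^{\alpha'}_{\rho^{-1}}g\right\rangle
=\left\langle f,\widetilde{\varphi_t}\ast\left(\mathrm{St}^{\alpha'}_{\rho^{-1}}g\right)\right\rangle,
\end{align*}
using the standard rule $\left\langle f\ast\eta,g\right\rangle=\left\langle f,\widetilde\eta\ast g\right\rangle$ with $\widetilde\eta(x)=\eta(-x)$. The key computation is then the elementary change of variables showing
$$\widetilde{\varphi_t}\ast\left(\mathrm{St}^{\alpha'}_{\rho^{-1}}g\right)=\mathrm{St}^{\alpha'}_{\rho^{-1}}\left(\widetilde{\varphi_{\rho t}}\ast g\right),$$
i.e. $\int \rho^{-d}\widetilde{\varphi_t}(x-y)\,\rho^{d/\alpha'}g(\rho y)\,dy$, after substituting $y\mapsto\rho^{-1}y$, equals $\rho^{d/\alpha'}\left(\widetilde{\varphi_{\rho t}}\ast g\right)(\rho x)$; one just tracks the factors $t^{-d}$ versus $(\rho t)^{-d}$ and the Jacobian. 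Plugging this back gives
$$\left\langle \mathrm{St}^\alpha_\rho\left(f\ast\varphi_t\right),g\right\rangle=\left\langle f,\mathrm{St}^{\alpha'}_{\rho^{-1}}\left(\widetilde{\varphi_{\rho t}}\ast g\right)\right\rangle=\left\langle \mathrm{St}^\alpha_\rho f,\widetilde{\varphi_{\rho t}}\ast g\right\rangle=\left\langle \left(\mathrm{St}^\alpha_\rho f\right)\ast\varphi_{\rho t},g\right\rangle,$$
which is exactly the right-hand side; since $g$ was arbitrary the two distributions coincide.

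I expect the only real point requiring care — not an obstacle so much as a bookkeeping hazard — to be keeping the scaling exponents $d/\alpha$ (for $\mathrm{St}^\alpha_\rho$ on $f$) and $d/\alpha'$ (for the dual dilation on test functions) straight, together with the $\rho^{-d}$ coming from $\varphi_t\mapsto$ the rescaled variable and the Jacobian of $y\mapsto\rho^{-1}y$; these must combine so that the normalization of $\varphi_{\rho t}$ (with its $(\rho t)^{-d}$) emerges correctly. A secondary routine point is justifying that $\widetilde\eta\ast g\in\mathcal S$ for $\eta,g\in\mathcal S$ so that all the pairings above are legitimate, which is classical. No deep input is needed; the statement is purely the compatibility of the distributional dilation with dilational convolution.
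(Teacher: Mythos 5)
Your argument is correct: the duality identity $\widetilde{\varphi_t}\ast(\mathrm{St}^{\alpha'}_{\rho^{-1}}g)=\mathrm{St}^{\alpha'}_{\rho^{-1}}(\widetilde{\varphi_{\rho t}}\ast g)$ checks out (the factors $\rho^{d/\alpha'}$, $\rho^{-d}$ from the Jacobian and $\rho^{d}$ from rescaling $\varphi_t$ into $\varphi_{\rho t}$ combine as you describe), and the rest follows from the definition of $\mathrm{St}^\alpha_\rho$ on distributions. This is essentially the paper's computation — the same rescaling identity $\varphi_t(\rho^{-1}x-z)=\rho^{d}\varphi_{\rho t}(x-\rho z)$ is the whole content — except the paper evaluates the smooth function $\mathrm{St}^\alpha_\rho(f\ast\varphi_t)$ pointwise at $x$ rather than pairing against a test function $g$.
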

 Infact, 
 \begin{eqnarray*}
\rho^{\frac{-d}{\alpha}}\left( f\ast \varphi_{t} \right)  (\rho^{-1}x)&=&\rho^{\frac{-d}{\alpha}}\left\langle f, \rho^d\varphi_{\rho t}( x-\rho \cdot)\right\rangle \\
&=&\rho^{\frac{d}{\alpha'}}\left\langle f, \varphi_{\rho t}( x-\rho\cdot)\right\rangle\\
&=&\left\langle f, \mathrm{St}^{\alpha'}_{\rho^{-1}}\left[  \varphi_{\rho t}( x-\cdot)\right] \right\rangle=\left( \mathrm{St}^\alpha_\rho f\ast \varphi_{\rho t}\right) (x). \end{eqnarray*}
\begin{lem}\label{conv2}
Let $f\in\mathcal S'(\mathbb R^d)$, $\varphi\in\mathcal S(\mathbb R^d)$, $\rho$ and $\alpha$ positive real numbers. We have 
 \begin{equation}\mathrm{St}^\alpha_\rho\left[ \left( f\ast\varphi_t\right) \ast P_t\right]  =\left(  \mathrm{St}^\alpha_\rho f \ast\varphi_{\rho t}\right)   \ast P_{\rho t}.\label{poisson1}
 \end{equation}
\end{lem}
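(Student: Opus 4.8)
The plan is to deduce \eqref{poisson1} from Lemma~\ref{dilconv} together with the elementary scaling behaviour of the Poisson kernel under dilations. I begin with a remark on well-definedness: here $f$ is a bounded tempered distribution (this is the only setting in which the left-hand side of \eqref{poisson1} is meaningful, since $P_t\notin\mathcal S(\mathbb R^d)$), so $g:=f\ast\varphi_t\in L^\infty(\mathbb R^d)$ because $\varphi_t\in\mathcal S(\mathbb R^d)$; moreover, by Lemma~\ref{dilconv}, $\left(\mathrm{St}^\alpha_\rho f\right)\ast\varphi_{\rho t}=\mathrm{St}^\alpha_\rho(f\ast\varphi_t)$, which — since the distributional dilation $\mathrm{St}^\alpha_\rho$ coincides on $L^\infty$ functions with the pointwise rescaling $h\mapsto\rho^{-d/\alpha}h(\rho^{-1}\cdot)$ (a one-line computation from the definition of $\mathrm{St}^\alpha_\rho$ recalled above) — is again an $L^\infty$ function. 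Hence both $g\ast P_t$ and $\left(\left(\mathrm{St}^\alpha_\rho f\right)\ast\varphi_{\rho t}\right)\ast P_{\rho t}$ are ordinary convolutions of an $L^\infty$ function with an integrable kernel, so they are bounded functions and no distributional subtlety remains.

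The key computational step is the scaling identity
\begin{equation}
\mathrm{St}^\alpha_\rho\left(h\ast P_t\right)=\left(\mathrm{St}^\alpha_\rho h\right)\ast P_{\rho t},\qquad h\in L^\infty(\mathbb R^d),\ t>0,\label{eq:poisscale}
\end{equation}
which I would establish exactly as in the display following Lemma~\ref{dilconv}: writing $\left(\mathrm{St}^\alpha_\rho(h\ast P_t)\right)(x)=\rho^{-d/\alpha}\int_{\mathbb R^d}h(y)P_t(\rho^{-1}x-y)\,dy$ and substituting $y\mapsto\rho^{-1}y$, one is left only with the elementary relation $P_t(\rho^{-1}z)=\rho^{d}P_{\rho t}(z)$, which follows immediately from $P_s(z)=s^{-d}P(s^{-1}z)$.

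Granting \eqref{eq:poisscale}, the proof closes in one line: apply it with $h=g=f\ast\varphi_t$ and then rewrite $\mathrm{St}^\alpha_\rho(f\ast\varphi_t)$ via Lemma~\ref{dilconv},
\begin{equation*}
\mathrm{St}^\alpha_\rho\left[\left(f\ast\varphi_t\right)\ast P_t\right]
=\left(\mathrm{St}^\alpha_\rho(f\ast\varphi_t)\right)\ast P_{\rho t}
=\left(\left(\mathrm{St}^\alpha_\rho f\right)\ast\varphi_{\rho t}\right)\ast P_{\rho t},
\end{equation*}
which is precisely \eqref{poisson1}.

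The main obstacle is not analytic depth but bookkeeping: one must check that every convolution in sight is legitimately defined (this is why the boundedness of $f$, and the resulting membership of $f\ast\varphi_t$ and $\left(\mathrm{St}^\alpha_\rho f\right)\ast\varphi_{\rho t}$ in $L^\infty$, is recorded at the outset) and that the distributional definition of $\mathrm{St}^\alpha_\rho$ agrees on these genuine functions with the pointwise rescaling used in \eqref{eq:poisscale}. Once these routine points are settled, the lemma is simply the concatenation of \eqref{eq:poisscale} with Lemma~\ref{dilconv}, and no further idea is needed.
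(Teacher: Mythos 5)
Your argument is correct and follows essentially the same route as the paper: the paper's displayed computation is exactly your identity \eqref{eq:poisscale} and Lemma~\ref{dilconv} carried out in a single chain of equalities (via the substitution $y\mapsto\rho^{-1}z$ inside the Poisson integral). Your added remark that $f$ must be a bounded distribution for the convolution with $P_t$ to be well defined is a legitimate refinement that the paper leaves implicit.
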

Relation (\ref{poisson1}) follows from the fact that 
\begin{eqnarray*}\rho^{\frac{-d}{\alpha}}\left( f \ast \varphi_{\rho^{-1}t} \right)\ast P_{\rho^{-1}t}(\rho^{-1}x)
&=&\rho^{\frac{-d}{\alpha}}\int_{\mathbb R^d}\left( f \ast \varphi_{\rho^{-1}t} \right)(\rho^{-1}x-y) P_{\rho^{-1}t}(y)dy\\
&=&\int_{\mathbb R^d}\left\langle f , \rho^{\frac{d}{\alpha'}}\varphi_{t} (x-z-\rho\cdot)\right\rangle  P_{t}(z)dz\\
&=&\int_{\mathbb R^d}\left\langle \mathrm{St}^\alpha_\rho f , \varphi_{t} (x-z-\cdot)\right\rangle  P_{t}(z)dz\\
&=&\int_{\mathbb R^d} \left( \mathrm{St}^\alpha_\rho f \ast \varphi_{t}\right)  (x-z)  P_{t}(z)dz=\left( \mathrm{St}^\alpha_\rho f \ast \varphi_{t}\right) \ast P_t(x).
\end{eqnarray*}
It comes from Lemma \ref{dilconv} and \ref{conv2} that for a bounded tempered distribution $f$ and $u(x,t)=f\ast P_t(x)$,
\begin{equation}
\left( \mathrm{St}^\alpha_\rho u\right) (x,t)= \left[  \left( \mathrm{St}^\alpha_\rho f\right) \ast P_t\right] (x),\quad\rho>0\text{  and }\alpha>0\label{convpoisson}
\end{equation}
for all  $t>0$.

\section{Cauchy-Riemann equations, Riesz transforms and  Hardy-Fofana spaces}

Let $u$ be a harmonic function on $\mathbb R^{d+1}_+$; i.e,  $u\in \mathcal{C}^{2}(\mathbb{R}^{d+1}_+)$ and  
 $\Delta u :=\sum_{j=1}^{d+1}\frac{\partial^{2}u}{(\partial x_{j})^{2}}=0,$ where  $x_{d+1}=t$ and  $\mathbb{R}^{d+1}_{+}:=\mathbb{R}^{d}\times ]0,+\infty[$.  We define its non tangential maximal function $u^\ast$ by 
\begin{equation}\label{maxfunct}
u^{\ast}(x):=\sup_{t>0}\sup_{|x-y|<t} |u(y,t)|\;\; \forall x\in\mathbb{R}^{d}.
\end{equation}
Let $f$ be a bounded tempered distribution, and $u(x,t)=P_t\ast f(x)$. 
As we can see in \cite{DmFe}, $u^\ast\in\left( L^p,\ell^q\right)^\alpha(\mathbb R^d)$ whenever $f\in\mathcal H^{(p,q,\alpha)}(\mathbb R^d)$.   We give in the next result a necessary and sufficient conditions for a harmonic function $u$ in $\mathbb R^{d+1}$ to have its non tangential maximal function in $(L^p,\ell^q)^\alpha(\mathbb R^d) $. The proof is based on the dilation characterization of Hardy-Fofana spaces and \cite[Proposition 2.1 ]{AjCf}.
\begin{prop}\label{kady1}
Let $0<p\leq\alpha\leq q<+\infty$ and $u$ an harmonic function on $\mathbb{R}^{d+1}_{+}.$
The maximal function $u^{\ast}$ belongs to $(L^{p},\ell^{q})^{\alpha}(\mathbb R^d)$ if and only if there exists $f\in \mathcal{H}^{(p,q,\alpha)}(\mathbb R^d)$ such that 
$$u(x,t):=f\ast P_{t}(x), \  (x,t)\in\mathbb{R}^{d+1}_{+}.$$
Moreover,
$\left\|f\right\|_{\mathcal{H}^{(p,q,\alpha)}}\approx \left\|u^{\ast}\right\|_{p,q,\alpha}.$

\end{prop}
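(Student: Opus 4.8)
The plan is to reduce the statement to its classical Hardy-amalgam counterpart, namely \cite[Proposition 2.1]{AjCf}, by exploiting the dilation characterization \eqref{deltachar}. The necessity direction is immediate: if $f \in \mathcal H^{(p,q,\alpha)}(\mathbb R^d)$ and $u(x,t) = f \ast P_t(x)$, then $u$ is harmonic on $\mathbb R^{d+1}_+$ (standard, since $P_t$ is the Poisson kernel), and $u^\ast$ is precisely the nontangential maximal function from \eqref{maxfunct}; the estimate $\Vert u^\ast \Vert_{p,q,\alpha} \lesssim \Vert f \Vert_{\mathcal H^{(p,q,\alpha)}}$ is exactly the fact recalled just before the statement, following from \eqref{deltachar}, \eqref{Poissonhpq}, and the commutation of $\mathrm{St}^\alpha_\rho$ with $\mathcal M$, together with the pointwise comparison of the nontangential maximal function and $\mathcal Mf$ in the amalgam setting.

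For the converse, suppose $u$ is harmonic on $\mathbb R^{d+1}_+$ with $u^\ast \in (L^p,\ell^q)^\alpha(\mathbb R^d)$. First I would observe that for each fixed $\rho > 0$, the rescaled function $u_\rho(x,t) := (\mathrm{St}^\alpha_\rho u)(x,t)$ (dilating in the $x$-variable and renormalizing) is again harmonic on $\mathbb R^{d+1}_+$, and a change of variables shows $(u_\rho)^\ast$ is, up to the $\mathrm{St}^\alpha_\rho$-renormalization, the dilate of $u^\ast$; hence $\sup_{\rho>0} \Vert (u_\rho)^\ast \Vert_{p,q} = \Vert u^\ast \Vert_{p,q,\alpha} < \infty$ by definition \eqref{normqpa}. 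In particular $u^\ast \in (L^p,\ell^q)(\mathbb R^d)$, so by \cite[Proposition 2.1]{AjCf} applied to $u$ there exists $f \in \mathcal H^{(p,q)}(\mathbb R^d)$ with $u(x,t) = f \ast P_t(x)$ and $\Vert f \Vert_{\mathcal H^{(p,q)}} \approx \Vert u^\ast \Vert_{p,q}$. The key point is then that the distribution $f$ producing $u$ is \emph{unique}: since $u(x,t) = f \ast P_t(x)$ for all $t > 0$, letting $t \to 0^+$ in $\mathcal S'$ identifies $f$ as the boundary value of $u$, independent of $\rho$. Applying \cite[Proposition 2.1]{AjCf} instead to the harmonic function $u_\rho$ gives a distribution $g_\rho \in \mathcal H^{(p,q)}(\mathbb R^d)$ with $u_\rho(x,t) = g_\rho \ast P_t(x)$ and $\Vert g_\rho \Vert_{\mathcal H^{(p,q)}} \approx \Vert (u_\rho)^\ast \Vert_{p,q}$; but by Relation \eqref{convpoisson} we have $u_\rho(x,t) = (\mathrm{St}^\alpha_\rho u)(x,t) = \big[(\mathrm{St}^\alpha_\rho f) \ast P_t\big](x)$, so uniqueness of the boundary value forces $g_\rho = \mathrm{St}^\alpha_\rho f$. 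Taking the supremum over $\rho > 0$ and invoking \eqref{deltachar},
\begin{equation*}
\Vert f \Vert_{\mathcal H^{(p,q,\alpha)}} = \sup_{\rho>0} \Vert \mathrm{St}^\alpha_\rho f \Vert_{\mathcal H^{(p,q)}} = \sup_{\rho>0} \Vert g_\rho \Vert_{\mathcal H^{(p,q)}} \approx \sup_{\rho>0} \Vert (u_\rho)^\ast \Vert_{p,q} = \Vert u^\ast \Vert_{p,q,\alpha} < \infty,
\end{equation*}
which shows $f \in \mathcal H^{(p,q,\alpha)}(\mathbb R^d)$ and gives the claimed norm equivalence.

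I expect the main obstacle to be the careful bookkeeping in the rescaling step: one must check precisely that $(\mathrm{St}^\alpha_\rho u)^\ast$ equals the $\mathrm{St}^\alpha_\rho$-renormalized dilate of $u^\ast$ (so that $\sup_\rho \Vert (u_\rho)^\ast \Vert_{p,q}$ really reproduces $\Vert u^\ast \Vert_{p,q,\alpha}$), and that the $\rho$-independence of the boundary-value distribution $f$ is legitimate — i.e.\ that $g_\rho = \mathrm{St}^\alpha_\rho f$ rather than merely agreeing up to some kernel. Both reduce to the identity \eqref{convpoisson} and the injectivity of $f \mapsto \{f \ast P_t\}_{t>0}$ on bounded tempered distributions, so the argument is essentially a matter of assembling the lemmas of Section \ref{prerequisite} correctly rather than a genuinely new difficulty. \epf
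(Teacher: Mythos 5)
Your proposal is correct and follows essentially the same route as the paper: reduce to \cite[Proposition 2.1]{AjCf}, use the identity $(\mathrm{St}^\alpha_\rho u)^\ast=\mathrm{St}^\alpha_\rho(u^\ast)$ together with Relation (\ref{convpoisson}) to identify the boundary distribution of the dilated function with $\mathrm{St}^\alpha_\rho f$, and then take the supremum over $\rho$ via (\ref{deltachar}). Your explicit uniqueness argument for $g_\rho=\mathrm{St}^\alpha_\rho f$ is a slightly more careful rendering of a step the paper takes for granted, but it is the same proof.
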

\begin{proof} Let $u$ be an harmonic function on $\mathbb{R}^{d+1}_{+},$ and $u^\ast$ the associate non tangential maximal function as defined in Relation (\ref{maxfunct}). 

We suppose that there exists $f\in\mathcal{H}^{(p,q,\alpha)}(\mathbb R^d)$ such that $u(x,t):=f\ast P_{t}(x)$ for all $ (x,t)\in\mathbb{R}^{d+1}_{+}$. 
From the Poisson characterization of Hardy-Fofana spaces (see \cite[Theorem 2.3.8 ]{DmFe}), we deduce that  $\left\|u^{\ast}\right\|_{p,q,\alpha}\leq C\left\|f\right\|_{\mathcal{H}^{(p,q,\alpha)}}$. 

For the converse, let us suppose that $u^{\ast}\in(L^{p},\ell^{q})^{\alpha}(\mathbb R^d)\subset (L^{p},\ell^{q})(\mathbb R^d)$. It comes from  \cite[Proposition 2.1 ]{AjCf} that there exists $f\in\mathcal H^{(p,q)}(\mathbb R^d)$ and a constant $C>0$ such that 
\begin{equation}
u(x,t)=(f\ast P_t)(x),\qquad (x,t)\in \mathbb R^{d+1}_+
\end{equation}
and 
\begin{equation*}
\frac{1}{C}\Vert f\Vert_{\mathcal H^{(p,q)}}\leq \Vert u^\ast\Vert_{p,q}\leq C\Vert f\Vert_{\mathcal H^{(p,q)}}.
\end{equation*}
Since $\mathrm{St}^\alpha_\rho f\in\mathcal H^{(p,q)}(\mathbb R^d)$ for all $\rho>0$,  $\mathrm{St}^\alpha_\rho u$ harmonic on $\mathbb R^{d+1}_+$ and 
$\left( \mathrm{St}^\alpha_\rho u\right) (x,t)=(\mathrm{St}^\alpha_\rho f)\ast P_t(x)$, it comes that 
$(\mathrm{St}^\alpha_\rho u)^\ast\in (L^p,L^q)(\mathbb R^d)$ and 
\begin{equation*}
\frac{1}{C}\Vert\mathrm{St}^\alpha_\rho  f\Vert_{\mathcal H^{(p,q)}}\leq \Vert (\mathrm{St}^\alpha_\rho  u)^\ast\Vert_{p,q}\leq C\Vert \mathrm{St}^\alpha_\rho  f\Vert_{\mathcal H^{(p,q)}}.
\end{equation*}
This relation being thrue for all $\rho>0$, we have 
\begin{equation*}
\frac{1}{C}\Vert f\Vert_{\mathcal H^{(p,q,\alpha)}}\leq \Vert u^\ast\Vert_{p,q,\alpha}\leq C\Vert f\Vert_{\mathcal H^{(p,q,\alpha)}},
\end{equation*}
where we use the trivial identity 
$(St^{\alpha}_{\rho} u)^\ast = St^\alpha_\rho u^\ast, \ \rho>0\text{ and } 0<\alpha<\infty.$
\end{proof}


We say that a vector values function  $F:=(u_{1},u_{2},...,u_{d+1})$, with $u_j:\mathbb R^{d+1}_+\rightarrow\mathbb R$,  $j\in\{1,2,...,d+1\}$,  satisfies the generalized Cauchy-Riemann equations (in short $F\in \mathrm{CR}(\mathbb{R}^{d+1}_{+})$) if
\begin{eqnarray}
\frac{\partial u_{j}}{\partial x_{k}}=\frac{\partial u_{k}}{\partial x_{j}}, \;1\leq j,k\leq d+1 \qquad\text{and}\qquad \sum_{j=1}^{d+1}\frac{\partial u_{j}}{\partial x_{j}}=0,
\end{eqnarray}
where we set  $x_{d+1}=t.$  Also recall that  for $j\in\{1,2,...d\},$ the  $j$-th  Riesz transform $\mathcal{R}_{j}(g)$ of a measurable function $g$ is formally defined by
\begin{eqnarray*}
\mathcal{R}_{j}(g)(x):=\lim_{\epsilon\rightarrow 0^+}\int_{|x-y|>\epsilon}K_{j}(x-y)g(y)dy\;\;\;\; \text{a.e}\;\;\; x\in\mathbb{R}^d.
\end{eqnarray*}

where $K_{j}(x):=\frac{\Gamma(\frac{d+1}{2})}{\pi^{\frac{d+1}{2}}}\frac{x_j}{|x|^{d+1}}$,  $x\in \mathbb{R}^d\backslash\{0\}.$

In \cite[Corollary 4.19]{AbFe2}, Ablé and Feuto  demonstrated that Riesz transformations are extendable into bounded linear operators on  Hardy-amalgam spaces $\mathcal H^{(p,q)}(\mathbb R^d)$ for $0<p\leq 1$. We will keep the notations  $\mathcal R_j$, $j=1,\cdots,d$ for these extentions.  Assaubay et al proved the following result.
\begin{prop}[\cite{AjCf}, Proposition 2.3]\label{propA}
Let $\frac{d-1}{d}<\min\left\lbrace p,q\right\rbrace <+\infty$. Suppose that $u$ is harmonic function in $\mathbb{R}^{d+1}_{+}.$ Then $u^{\ast}\in (L^{p},\ell^{q})(\mathbb R^d)$ if and only if there exists an harmonic  vector $F:=(u_{1},...,u_{d+1})\in \mathrm{CR}(\mathbb{R}^{d+1}_{+})$
such that $u_{d+1}:= u$ and $\sup_{t>0}\left\| |F(.,t)|\right\|_{p,q}<+\infty.$ 

Furthermore,  $\sup_{t>0}\left\| |F(.,t)|\right\|_{p,q}\approx \left\|u^{\ast}\right\|_{p,q}$.
\end{prop}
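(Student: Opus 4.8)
\medskip
\noindent\textbf{Proof strategy.} The plan is to prove the two implications separately, both along the classical Stein--Weiss/Fefferman--Stein scheme but with the Lebesgue norm systematically replaced by the amalgam (quasi-)norm $\Vert\cdot\Vert_{p,q}$; the constraint $\frac{d-1}{d}<\min\{p,q\}$ will enter only to produce an auxiliary exponent $s$.

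I would first handle the necessity of the Cauchy--Riemann system. Assuming $u^{\ast}\in(L^p,\ell^q)(\mathbb R^d)$, the Poisson characterization of Hardy--amalgam spaces, \cite[Proposition 2.1]{AjCf} (already invoked above), provides $f\in\mathcal H^{(p,q)}(\mathbb R^d)$ with $u(x,t)=(f\ast P_t)(x)$ and $\Vert f\Vert_{\mathcal H^{(p,q)}}\approx\Vert u^{\ast}\Vert_{p,q}$. I would then set $u_j(x,t):=(\mathcal R_jf)\ast P_t(x)$ for $1\le j\le d$ and $u_{d+1}:=u$. Each $\mathcal R_jf$ belongs to $\mathcal H^{(p,q)}(\mathbb R^d)$ since $\mathcal R_j$ extends to a bounded operator there (\cite[Corollary 4.19]{AbFe2} for $p\le1$, and Calder\'on--Zygmund theory on amalgams for $p>1$), so every $u_j$ is harmonic on $\mathbb R^{d+1}_+$. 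Writing $u_j=f\ast(Q_j)_t$ with $Q_j$ the $j$-th conjugate Poisson kernel and using the elementary identities satisfied by $(P,Q_1,\dots,Q_d)$ ($\partial_tQ_j=\partial_{x_j}P$, $\partial_{x_k}Q_j=\partial_{x_j}Q_k$, $\partial_tP+\sum_j\partial_{x_j}Q_j=0$), which transfer to convolutions with the distribution $f$ because differentiation commutes with convolution (everything being legitimate via the splitting $f\ast P_t=(f\ast\varphi_t)\ast P_t+f\ast\psi_t$), I would obtain $F:=(u_1,\dots,u_{d+1})\in\mathrm{CR}(\mathbb R^{d+1}_+)$. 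Finally, from $|F(x,t)|\le\sum_{j=1}^{d+1}u_j^{\ast}(x)$, the (quasi-)triangle inequality for a finite sum in $(L^p,\ell^q)$, the Poisson characterization of $\mathcal H^{(p,q)}$ (Relation~(\ref{Poissonhpq})) and the boundedness of $\mathcal R_j$, I get $\sup_{t>0}\Vert\,|F(\cdot,t)|\,\Vert_{p,q}\le C\Vert f\Vert_{\mathcal H^{(p,q)}}\approx\Vert u^{\ast}\Vert_{p,q}$.

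For the converse I would start from a harmonic $F=(u_1,\dots,u_{d+1})\in\mathrm{CR}(\mathbb R^{d+1}_+)$ with $u_{d+1}=u$ and $M:=\sup_{t>0}\Vert\,|F(\cdot,t)|\,\Vert_{p,q}<\infty$, and fix $s$ with $\frac{d-1}{d}\le s<\min\{p,q\}$. By the Stein--Weiss subharmonicity lemma $|F|^{s}$ is nonnegative and subharmonic on $\mathbb R^{d+1}_+$. The family $\{|F(\cdot,t)|^{s}\}_{t>0}$ is bounded in $(L^{p/s},\ell^{q/s})(\mathbb R^d)$, which is reflexive since $p/s,q/s>1$; extracting a weakly convergent subsequence as $t_n\downarrow0$ produces $h\ge0$ with $\Vert h\Vert_{p/s,q/s}\le\liminf_n\Vert\,|F(\cdot,t_n)|\,\Vert_{p,q}^{s}\le M^{s}$ (using $\Vert g^{s}\Vert_{p/s,q/s}=\Vert g\Vert_{p,q}^{s}$ for $g\ge0$ and lower semicontinuity of the norm). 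Subharmonicity of $|F|^{s}$ together with the semigroup property of $P_t$ then gives the pointwise bound $|F(x,t)|^{s}\le(P_t\ast h)(x)$ on $\mathbb R^{d+1}_+$, whence
\[
u^{\ast}(x)\le\Big(\sup_{t>0}\sup_{|x-y|<t}(P_t\ast h)(y)\Big)^{1/s}\le C\,\big(\mathfrak M(h)(x)\big)^{1/s},
\]
the last step being the standard domination of the nontangential Poisson maximal function by $\mathfrak M$. Since $\Vert\mathfrak M(h)^{1/s}\Vert_{p,q}=\Vert\mathfrak M(h)\Vert_{p/s,q/s}^{1/s}$ and $\mathfrak M$ is bounded on $(L^{p/s},\ell^{q/s})(\mathbb R^d)$ in the relevant range, this yields $\Vert u^{\ast}\Vert_{p,q}\le C\,\Vert h\Vert_{p/s,q/s}^{1/s}\le C\,M$. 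A Liouville-type argument (an entire harmonic gradient lying in $(L^p,\ell^q)$ must vanish) identifies the given $F$ with the Riesz system of the $f$ for which $u=f\ast P_t$, so the estimate of the previous paragraph also applies to $F$ and we conclude $\sup_{t>0}\Vert\,|F(\cdot,t)|\,\Vert_{p,q}\approx\Vert u^{\ast}\Vert_{p,q}$.

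The hard part will be the converse direction, and within it the rigorous passage from subharmonicity to the Poisson majorization $|F(x,t)|^{s}\le(P_t\ast h)(x)$: one must control the growth of $|F|$ as $t\to0^{+}$, justify that a half-space Poisson average dominates a nonnegative subharmonic function of admissible growth, and verify weak compactness and lower semicontinuity of the norm in the amalgam space $(L^{p/s},\ell^{q/s})$. The remaining ingredients---Stein--Weiss subharmonicity, boundedness of $\mathfrak M$ and of $\mathcal R_j$ on the relevant amalgam and Hardy--amalgam spaces, and the Poisson characterization of $\mathcal H^{(p,q)}$---are available in the literature and would be used here as black boxes.
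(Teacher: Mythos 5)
This proposition is not proved in the paper at all: it is quoted verbatim as Proposition~2.3 of \cite{AjCf} and used as a black box, so there is no in-paper argument to compare against. Your sketch reconstructs the standard Fefferman--Stein/Stein--Weiss scheme transported to amalgam norms (Poisson representation plus boundedness of $\mathcal R_j$ on $\mathcal H^{(p,q)}$ for the necessity; subharmonicity of $|F|^{s}$ with $\frac{d-1}{d}\le s<\min\{p,q\}$, weak compactness in the reflexive space $(L^{p/s},\ell^{q/s})$, Poisson majorization and the Hardy--Littlewood maximal bound for the sufficiency), which is precisely the route taken in the cited source, and the identities $\Vert g^{s}\Vert_{p/s,q/s}=\Vert g\Vert_{p,q}^{s}$ and the reflexivity claim you rely on do hold. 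The only substantive points left open are the ones you yourself flag --- the harmonic-majorization step $|F(x,t+t_n)|^{s}\le\bigl(P_t\ast|F(\cdot,t_n)|^{s}\bigr)(x)$, which needs a pointwise decay estimate for $|F|$ deduced from the uniform amalgam bound via the mean value property, and the Liouville-type identification of $F$ with the Riesz system of $f$ --- so the proposal is a sound outline rather than a complete proof, but it is the right argument.
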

Since  $u^{\ast}\in (L^{p},\ell^{q})(\mathbb R^d)$ if and only if $u=f\ast P_t$ for some $f\in\mathcal H^{(p,q)}(\mathbb R^d)$, they  proved that one can take $u_j(x,t)=\mathcal R_j(f)\ast P_t(x)$, $j=1,\cdots,d$. 

In the case of Hardy-Fofana's spaces, we have the following.

\begin{prop}\label{Kdy}
Assume that $\frac{d-1}{d}<p\leq\alpha\leq q<+\infty$ and $u$ is an harmonic function in $\mathbb{R}^{d+1}_{+}.$ Then $u^{\ast}\in (L^{p},\ell^{q})^{\alpha}(\mathbb R^d)$ if and only if there exists an harmonic  vector $F:=(u_{1},...,u_{d+1})\in \mathrm{CR}(\mathbb{R}^{d+1}_{+})$
such that $u_{d+1}:= u$ and $\sup_{t>0}\left\| |F(.,t)|\right\|_{p,q,\alpha}<+\infty.$ Furthermore
\begin{equation}\sup_{t>0}\left\| |F(.,t)|\right\|_{p,q,\alpha}\approx \left\|u^{\ast}\right\|_{p,q,\alpha}\label{equivn}
\end{equation}
\end{prop}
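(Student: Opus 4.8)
The plan is to mimic the pattern already used for Proposition~\ref{kady1}: lift the amalgam-level statement of Proposition~\ref{propA} to the Fofana level by testing against the whole dilation family $\{\mathrm{St}^\alpha_\rho\}_{\rho>0}$ and invoking the characterization \eqref{deltachar}. First I would establish the forward implication. Suppose $F=(u_1,\dots,u_{d+1})\in\mathrm{CR}(\mathbb R^{d+1}_+)$ with $u_{d+1}=u$ and $\sup_{t>0}\|\,|F(\cdot,t)|\,\|_{p,q,\alpha}<\infty$. Since $|u(\cdot,t)|\le|F(\cdot,t)|$ pointwise, property~\eqref{rfof4} gives $\sup_{t>0}\|u(\cdot,t)\|_{p,q,\alpha}<\infty$, and in particular $\sup_{t>0}\|u(\cdot,t)\|_{p,q}<\infty$, so $u$ is a legitimate harmonic function of the type handled by Proposition~\ref{propA} in the amalgam setting. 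That proposition yields $f\in\mathcal H^{(p,q)}(\mathbb R^d)$ with $u=f\ast P_t$ and, crucially, one can take $u_j(x,t)=\mathcal R_j(f)\ast P_t(x)$ for $j=1,\dots,d$. Now I would apply the dilation $\mathrm{St}^\alpha_\rho$: using Lemma~\ref{conv2}/\eqref{convpoisson} together with the fact that the Riesz transforms $\mathcal R_j$ commute with $\mathrm{St}^\alpha_\rho$ (they are convolution-type operators homogeneous of degree $0$), one checks $\mathrm{St}^\alpha_\rho u_j(x,t)=\mathcal R_j(\mathrm{St}^\alpha_\rho f)\ast P_t(x)$, so that $\mathrm{St}^\alpha_\rho F$ is again a Cauchy-Riemann vector associated to $\mathrm{St}^\alpha_\rho f$. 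Applying the amalgam estimate of Proposition~\ref{propA} to $\mathrm{St}^\alpha_\rho u$ and using $(\mathrm{St}^\alpha_\rho u)^\ast=\mathrm{St}^\alpha_\rho u^\ast$ gives
\[
\|(\mathrm{St}^\alpha_\rho u)^\ast\|_{p,q}\approx \sup_{t>0}\|\,|(\mathrm{St}^\alpha_\rho F)(\cdot,t)|\,\|_{p,q}.
\]
Taking the supremum over $\rho>0$ on both sides and reading off \eqref{deltachar} — noting $\|\mathrm{St}^\alpha_\rho u^\ast\|_{p,q}$ over $\rho$ is $\|u^\ast\|_{p,q,\alpha}$, and the right side sups to $\sup_{t>0}\|\,|F(\cdot,t)|\,\|_{p,q,\alpha}$ after interchanging the two suprema — yields $u^\ast\in(L^p,\ell^q)^\alpha$ with the desired equivalence \eqref{equivn}.

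\textbf{The converse.} Suppose $u^\ast\in(L^p,\ell^q)^\alpha(\mathbb R^d)$. By Proposition~\ref{kady1} there is $f\in\mathcal H^{(p,q,\alpha)}(\mathbb R^d)$ with $u=f\ast P_t$ and $\|f\|_{\mathcal H^{(p,q,\alpha)}}\approx\|u^\ast\|_{p,q,\alpha}$. Since $\mathcal H^{(p,q,\alpha)}\subset\mathcal H^{(p,q)}$, the Riesz transforms $\mathcal R_j f$ are well-defined (via the extension of \cite[Corollary 4.19]{AbFe2}, valid here because $\frac{d-1}{d}<p$), and I set $u_j(x,t):=\mathcal R_j(f)\ast P_t(x)$ for $j=1,\dots,d$ and $u_{d+1}:=u$. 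That $F=(u_1,\dots,u_{d+1})\in\mathrm{CR}(\mathbb R^{d+1}_+)$ is the standard computation relating conjugate harmonic functions to Riesz transforms of Poisson integrals, identical to the amalgam case treated in \cite{AjCf}. It remains to bound $\sup_{t>0}\|\,|F(\cdot,t)|\,\|_{p,q,\alpha}$. Here I would again dilate: $\mathrm{St}^\alpha_\rho$ sends $F(\cdot,t)$ to the Cauchy-Riemann vector of $\mathrm{St}^\alpha_\rho f$ evaluated at level $t$, so by the amalgam half of Proposition~\ref{propA},
\[
\sup_{t>0}\|\,|(\mathrm{St}^\alpha_\rho F)(\cdot,t)|\,\|_{p,q}\approx\|(\mathrm{St}^\alpha_\rho u)^\ast\|_{p,q}=\|\mathrm{St}^\alpha_\rho u^\ast\|_{p,q},
\]
and taking $\sup_{\rho>0}$ gives $\sup_{t>0}\|\,|F(\cdot,t)|\,\|_{p,q,\alpha}\lesssim\|u^\ast\|_{p,q,\alpha}<\infty$, completing both the membership and the reverse inequality in \eqref{equivn}.

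\textbf{Main obstacle.} The delicate point is the legitimacy of interchanging the two suprema $\sup_\rho$ and $\sup_t$ when transporting the estimate $\sup_t\|\,|(\mathrm{St}^\alpha_\rho F)(\cdot,t)|\,\|_{p,q}\approx\|\mathrm{St}^\alpha_\rho u^\ast\|_{p,q}$ to the Fofana norm: one direction of \eqref{equivn} needs $\sup_\rho\sup_t = \sup_t\sup_\rho$, which is harmless for suprema, but one must be careful that $\sup_t\|\,|(\mathrm{St}^\alpha_\rho F)(\cdot,t)|\,\|_{p,q}$ is genuinely comparable to $\|\mathrm{St}^\alpha_\rho u^\ast\|_{p,q}$ \emph{with a constant independent of $\rho$} — this is exactly what the scaling-invariance of the amalgam estimate in Proposition~\ref{propA} provides, since the constant there depends only on $d,p,q$. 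A secondary technical check is the identity $\mathcal R_j(\mathrm{St}^\alpha_\rho f)=\mathrm{St}^\alpha_\rho(\mathcal R_j f)$ at the level of tempered distributions (not just functions), which follows because $\mathrm{St}^\alpha_\rho$ differs from the ordinary $L^2$-dilation only by a scalar factor absorbed by the degree-$0$ homogeneity of $\mathcal R_j$; I would record this as a short lemma or a parenthetical remark before the main argument. Apart from these two points the proof is a faithful dilation-lift of the known amalgam result.
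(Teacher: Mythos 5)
Your proposal is correct and follows essentially the same route as the paper: lift the amalgam-level Proposition~\ref{propA} to the Fofana level by applying the dilation family $\{\mathrm{St}^\alpha_\rho\}_{\rho>0}$, using $(\mathrm{St}^\alpha_\rho u)^\ast=\mathrm{St}^\alpha_\rho u^\ast$, the commutation of $\mathrm{St}^\alpha_\rho$ with the Riesz transforms and with convolution by the Poisson kernel, and then taking suprema over $\rho$ (with the harmless reparametrization $t\mapsto\rho^{-1}t$ inside $\sup_{t>0}$). In fact you treat both implications explicitly, whereas the paper's written proof only carries out the direction starting from $u^\ast\in(L^p,\ell^q)^\alpha(\mathbb R^d)$; your identification of the $\rho$-uniformity of the constants as the key point matches the paper's argument.
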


\begin{proof}
Let $\frac{d-1}{d}<p\leq\alpha\leq q<+\infty$ and $u$ an harmonic function on $\mathbb{R}^{d+1}_{+}.$

We suppose that $u^{\ast}\in (L^{p},\ell^{q})^{\alpha}(\mathbb R^d).$ Since $(L^{p},\ell^{q})^{\alpha}(\mathbb R^d)\subset(L^{p},\ell^{q})(\mathbb R^d)$, Proposition \ref{propA} assert that  there exists $f\in  (L^p,\ell^q)(\mathbb R^d)$ so that:
\begin{itemize}
\item  $u(x,t)=f\ast P_t(x)$,
\item  the harmonic vector $F=(u_1,\cdots,u_{d+1})$ with  $u_j(x,t)=R_j(f)\ast P_t(x)$  for $j\in\left\lbrace 1,\cdots,d\right\rbrace $  and $u_{d+1}=u$ belongs to $\mathrm{CR}(\mathbb R^{d+1}_+)$,
\item 
$\sup_{t>0}\Vert\vert F(\cdot,t)\vert\Vert_{p,q}\approx\Vert u^\ast\Vert_{p,q}.$
\end{itemize}
 Since  $u^\ast\in \left( L^p,\ell^q\right) ^\alpha(\mathbb R^d)$ we have that  the tempered distribution $f$ belongs to $\mathcal{H}^{(p,q,\alpha)}(\mathbb R^d)$, thanks to Proposition \ref{kady1}.  All we have to prove now is that $x\mapsto F(x,t)$ belongs to $(L^{p},\ell^{q})^{\alpha}(\mathbb R^d)$ for $t>0$ and that  Relation (\ref{equivn}) is satisfies.

Fix $t>0$ and $\rho>0$.  
 Since $u^\ast\in \left( L^p,\ell^q\right) ^\alpha(\mathbb R^d)$ and $\left( \mathrm{St}^\alpha_\rho u\right) ^\ast=\mathrm{St}^\alpha_\rho \left( u ^\ast\right)$, we have that for $\rho>0$, 
$\Vert \left(\mathrm{St}^\alpha_\rho  u\right)^\ast\Vert_{p,q}\leq \Vert  u^\ast\Vert_{p,q,\alpha}$. Hence $\left( \mathrm{St}^\alpha_\rho u\right)^\ast\in\left( L^p,\ell^q\right) (\mathbb R^d)$ so that there exists $f_\rho\in\mathcal H^{(p,q)}(\mathbb R^d)$ satisfying 
$$(\mathrm{St}^\alpha_\rho u)(x,t)=(f_\rho\ast P_t)(x),$$
with \begin{equation}
F_\rho(x,t):=\left( \mathcal R_1(f_\rho)\ast P_t(x),\cdots,\mathcal R_d(f^\rho)\ast P_t(x),  (f_\rho)\ast P_t)(x)\right)\label{vectordilation}
\end{equation}
 belonging to $\mathrm{CR}_+(\mathbb R^{d+1}_+)$ and 
\begin{equation}\sup_{t>0}\Vert\vert F_\rho(\cdot,t)\vert\Vert_{p,q}\approx\Vert \mathrm{St}^\alpha_\rho (u^\ast)\Vert_{p,q}.\label{equiv}
\end{equation}
Moreover $(\mathrm{St}^\alpha_\rho u)(x,t)=(\mathrm{St}^\alpha_\rho f)\ast P_t(x)$, thanks to Relation (\ref{convpoisson}). It follows that $f_\rho\ast P_t=(\mathrm{St}^\alpha_\rho f)\ast P_t$ for all $t>0$ so that $f_\rho=\mathrm{St}^\alpha_\rho f$. We recall that the last equality comes from the fact that for $f\in \mathcal H^{(p,q)}(\mathbb R^d)$, $f\ast P_t$ tends to $f$ in $\mathcal S'(\mathbb R^d)$ as $t$ goes to 0. Replacing  $f_\rho$ by $\mathrm{St}^\alpha_\rho f$  in Relation (\ref{vectordilation}) yields  $F_\rho(x,t)=\left( \mathcal R_1(\mathrm{St}^\alpha_\rho f)\ast P_t(x),\cdots,\mathcal R_d(\mathrm{St}^\alpha_\rho f)\ast P_t(x),  (\mathrm{St}^\alpha_\rho f)\ast P_t)(x)\right)$. Since the operator $\mathrm{St}^\alpha_\rho $ commute with $R_j$ we have that
\begin{eqnarray*}F_\rho(\cdot,t)&=&\left( \mathrm{St}^\alpha_\rho (\mathcal R_1f)\ast P_t(\cdot),\cdots,\mathrm{St}^\alpha_\rho (\mathcal R_d f)\ast P_t(\cdot),  (\mathrm{St}^\alpha_\rho f)\ast P_t)(\cdot)\right)\\
&=&\left( \mathrm{St}^\alpha_\rho \left( u_1(\cdot,\rho^{-1}t)\right) ,\cdots,\mathrm{St}^\alpha_\rho\left(  u_d(\cdot,\rho^{-1}t)\right) ,\mathrm{St}^\alpha_\rho \left( u_{d+1}(\cdot,\rho^{-1}t)\right) \right) \\
&=&\mathrm{St}^\alpha_\rho \left( F(\cdot,\rho^{-1}t)\right). 
\end{eqnarray*}
If we take this expression of $F_\rho$ in Relation (\ref{equiv}) we obtain that 
$$\sup_{t>0}\Vert\vert \mathrm{St}^\alpha_\rho (F(\cdot,\rho^{-1}t)\vert\Vert_{p,q}\approx\Vert \mathrm{St}^\alpha_\rho (u^\ast)\Vert_{p,q}.$$
But $\sup_{t>0}\Vert\vert \mathrm{St}^\alpha_\rho (F(\cdot,\rho^{-1}t))\vert\Vert_{p,q}=\sup_{t>0}\Vert\vert \mathrm{St}^\alpha_\rho (F(\cdot,t))\vert\Vert_{p,q}$ and the result follow from the definition of Hardy-Fofana space.
\end{proof}

The next result gives a characterization of $\mathcal H^{(p,q,\alpha)}(\mathbb R^d)$ via Riesz transforms $\mathcal R_j(f\ast \phi)$. Since we need to use the characterization of $\mathcal H^{(p,q)}(\mathbb R^d)$ given in \cite{AjCf},  we give the following definition.

\begin{defn} Let $0<p\leq \alpha\leq q<\infty$.
A tempered distribution $f$ is said to be :
\begin{itemize}
\item $(p,q)$-restricted at infinity if there exists $\mu_{0}\geq 1$ such that for $\mu\geq \mu_{0}$, we have  $$f\ast \phi\in (L^{p\mu},\ell^{q\mu})(\mathbb R^d),\quad\phi \in \mathcal{S}(\mathbb{R}^{d}).$$ 
\item 
$(p,q,\alpha)$-restricted at infinity if there exists $\mu_{0}\geq 1$ such that for $\mu\geq \mu_{0}$, we have  $$f\ast \phi\in (L^{p\mu},\ell^{q\mu})^{\alpha \mu}(\mathbb R^d),\quad\phi \in \mathcal{S}(\mathbb{R}^{d}).$$ 
\end{itemize}
\end{defn}
It is easy to see that tempered distributions which are $(p,q,\alpha)$-restricted for $p\leq \alpha\leq q$, are also $(p,q)$-restricted. 
Theorem 1.1 in \cite{AjCf} assert that  a tempered distribution $f$ belongs to $\mathcal H^{(p,q)}(\mathbb R^d)$ for  $\frac{d-1}{d}<\min{(p,q)}<\infty$, if and only if it is $(p,q)$-restricted at infty and, for $\phi\in\mathcal S(\mathbb R^d)$ with non vanish integral,
$$\sup_{t>0}\left( \Vert f\ast \phi_t\Vert_{p,q}+\sum_{j=1}^{d}\Vert (\mathcal R_jf)\ast\phi_t\Vert_{p,q}\right) <\infty.$$
When this is the case, 
$$\Vert f\Vert_{\mathcal H^{(p,q)}}\approx \sup_{t>0}\left( \Vert f\ast \phi_t\Vert_{p,q}+\sum_{j=1}^{d}\Vert (\mathcal R_jf)\ast\phi_t\Vert_{p,q}\right) .$$

In the case of Hardy-Fofana space, we have the following result.
\begin{thm}\label{yvo}
Let  $\frac{d-1}{d}<p\leq\alpha\leq q<\infty$, $f\in\mathcal{S'}(\mathbb{R}^{d})$.
  Then $f\in\mathcal{H}^{(p,q,\alpha)}(\mathbb R^d)$ if and only if $f$ is $(p,q,\alpha)$-restricted at infinity and, for $\phi\in\mathcal S(\mathbb R^d)$ with non vanish integral,
\begin{equation}
\sup_{t>0}\left( \left\|f\ast \phi_{t}\right\|_{p,q,\alpha}+ \sum_{j=1}^{d} \left\|(\mathcal R_{j}f)\ast\phi_{t}\right\|_{p,q,\alpha} \right) <+\infty.\label{cond2}
\end{equation}
Moreover, 
\begin{equation}
 \left\|f\right\|_{\mathcal{H}^{(p,q,\alpha)}}  \approx \sup_{t>0}\left( \left\|f\ast \phi_{t}\right\|_{p,q,\alpha}+ \sum_{j=1}^{d} \left\|(\mathcal R_{j}f)\ast\phi_{t}\right\|_{p,q,\alpha} \right) .\label{equiv2}
 \end{equation}

\end{thm}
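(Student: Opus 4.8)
The plan is to transfer the known Hardy-amalgam characterization (Theorem 1.1 of \cite{AjCf}, quoted above) to the Fofana setting by exploiting the dilation characterization \eqref{deltachar}, exactly in the spirit of the proofs of Proposition \ref{kady1} and Proposition \ref{Kdy}. The guiding principle is that $\mathrm{St}^\alpha_\rho$ commutes (in the appropriate twisted sense) with convolution by $\phi_t$ and with the Riesz transforms $\mathcal R_j$, and rescales the ``time'' parameter; so a supremum over $t>0$ of amalgam norms, taken after applying $\mathrm{St}^\alpha_\rho$ and then supremized over $\rho>0$, becomes precisely the Fofana norm of the corresponding quantity.

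First I would prove the forward implication. Suppose $f\in\mathcal H^{(p,q,\alpha)}(\mathbb R^d)$. By \eqref{deltachar}, $\mathrm{St}^\alpha_\rho f\in\mathcal H^{(p,q)}(\mathbb R^d)$ for every $\rho>0$ with $\sup_\rho\Vert \mathrm{St}^\alpha_\rho f\Vert_{\mathcal H^{(p,q)}}=\Vert f\Vert_{\mathcal H^{(p,q,\alpha)}}<\infty$. Applying the $\mathcal H^{(p,q)}$ theorem of \cite{AjCf} to $g:=\mathrm{St}^\alpha_\rho f$ gives that $g$ is $(p,q)$-restricted and $\sup_{t>0}\big(\Vert g\ast\phi_t\Vert_{p,q}+\sum_j\Vert(\mathcal R_jg)\ast\phi_t\Vert_{p,q}\big)\approx\Vert g\Vert_{\mathcal H^{(p,q)}}$. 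Now I would use the commutation relations: by Lemma \ref{dilconv}, $\mathrm{St}^\alpha_\rho(f\ast\phi_t)=(\mathrm{St}^\alpha_\rho f)\ast\phi_{\rho t}$, and since $\mathrm{St}^\alpha_\rho$ commutes with $\mathcal R_j$ (as used in the proof of Proposition \ref{Kdy}), also $\mathrm{St}^\alpha_\rho((\mathcal R_jf)\ast\phi_t)=(\mathcal R_j(\mathrm{St}^\alpha_\rho f))\ast\phi_{\rho t}$. Taking $\Vert\cdot\Vert_{p,q}$ and the supremum over $t>0$ (which absorbs the change $t\mapsto\rho t$), then the supremum over $\rho>0$, and invoking \eqref{deltachar} together with the definition of $\Vert\cdot\Vert_{p,q,\alpha}$ as $\sup_\rho\Vert\mathrm{St}^\alpha_\rho(\cdot)\Vert_{p,q}$, yields both \eqref{cond2} and the $\gtrsim$ half of \eqref{equiv2}. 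For the restriction condition: $f$ is $(p,q,\alpha)$-restricted at infinity because, for $\mu\ge\mu_0$, each $\mathrm{St}^\alpha_\rho f$ is $(p,q)$-restricted with $(\mathrm{St}^\alpha_\rho f)\ast\phi\in(L^{p\mu},\ell^{q\mu})(\mathbb R^d)$, and the same commutation/supremum argument (now with the exponents $p\mu,q\mu,\alpha\mu$) shows $f\ast\phi\in(L^{p\mu},\ell^{q\mu})^{\alpha\mu}(\mathbb R^d)$; here one must check that the constant $\mu_0$ can be chosen uniformly in $\rho$, which follows from the explicit nature of the restriction condition and the scaling invariance of the classes.

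For the converse, assume $f$ is $(p,q,\alpha)$-restricted at infinity and \eqref{cond2} holds. Since $(p,q,\alpha)$-restricted implies $(p,q)$-restricted, and since for each fixed $\rho>0$ the finiteness of $\sup_{t>0}\big(\Vert (\mathrm{St}^\alpha_\rho f)\ast\phi_t\Vert_{p,q}+\sum_j\Vert(\mathcal R_j\mathrm{St}^\alpha_\rho f)\ast\phi_t\Vert_{p,q}\big)$ follows from \eqref{cond2} via the same commutation identities (the quantity equals $\sup_{t>0}\big(\Vert\mathrm{St}^\alpha_\rho(f\ast\phi_t)\Vert_{p,q}+\cdots\big)\le\Vert f\ast\phi_{(\cdot)}\Vert\text{-type bound}\le$ the left side of \eqref{cond2}), the $\mathcal H^{(p,q)}$ theorem gives $\mathrm{St}^\alpha_\rho f\in\mathcal H^{(p,q)}(\mathbb R^d)$ with $\Vert\mathrm{St}^\alpha_\rho f\Vert_{\mathcal H^{(p,q)}}\lesssim\sup_{t>0}\big(\Vert f\ast\phi_t\Vert_{p,q,\alpha}+\sum_j\Vert(\mathcal R_jf)\ast\phi_t\Vert_{p,q,\alpha}\big)$, uniformly in $\rho$. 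Taking $\sup_{\rho>0}$ and using \eqref{deltachar} gives $f\in\mathcal H^{(p,q,\alpha)}(\mathbb R^d)$ and the $\lesssim$ half of \eqref{equiv2}.

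The main obstacle I anticipate is the bookkeeping around the restriction-at-infinity hypothesis: one needs to verify that ``$(p,q,\alpha)$-restricted at infinity'' for $f$ is equivalent to ``$(p,q)$-restricted at infinity, uniformly in $\rho$, for the family $\{\mathrm{St}^\alpha_\rho f\}$'' with a common $\mu_0$, and that the membership $f\ast\phi\in(L^{p\mu},\ell^{q\mu})^{\alpha\mu}$ is correctly recovered from the uniform-in-$\rho$ amalgam bounds via \eqref{normqpa}. This requires care because $\phi$ ranges over all of $\mathcal S(\mathbb R^d)$ while the dilation $\mathrm{St}^{\alpha'}_{\rho^{-1}}$ acts on the test function side; one should check that $\mathrm{St}^{\alpha'}_{\rho^{-1}}\phi$ stays in $\mathcal S$ and that $(\mathrm{St}^\alpha_\rho f)\ast\phi=\mathrm{St}^\alpha_\rho(f\ast(\mathrm{St}^{\alpha'}_{\rho}\phi))$ up to the right power of $\rho$, so that a single $\mu_0$ works for all $\phi$ simultaneously. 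Apart from this point, every step is a direct application of the commutation lemmas \ref{dilconv}, \ref{conv2} and the dilation characterization, exactly mirroring the proof of Proposition \ref{Kdy}.
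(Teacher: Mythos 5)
Your overall strategy coincides with the paper's: both directions of the norm equivalence are obtained by applying the Hardy--amalgam characterization of Assaubay et al.\ to each dilate $\mathrm{St}^\alpha_\rho f$, using the commutation of $\mathrm{St}^\alpha_\rho$ with convolution by $\phi_t$ (Lemma \ref{dilconv}) and with the Riesz transforms, and then taking the supremum over $\rho$ via the dilation characterization (\ref{deltachar}). That part of your argument is sound and is essentially what the paper does.

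There is, however, a genuine gap in your treatment of the $(p,q,\alpha)$-restriction at infinity in the forward direction, precisely at the point you flag as ``the main obstacle.'' Knowing that each $\mathrm{St}^\alpha_\rho f$ is $(p,q)$-restricted only gives the \emph{qualitative} membership $(\mathrm{St}^\alpha_\rho f)\ast\phi\in (L^{p\mu},\ell^{q\mu})(\mathbb R^d)$ for each fixed $\rho$; the theorem of \cite{AjCf} supplies no bound on $\left\|(\mathrm{St}^\alpha_\rho f)\ast\phi\right\|_{p\mu,q\mu}$ in terms of $\left\|\mathrm{St}^\alpha_\rho f\right\|_{\mathcal H^{(p,q)}}$, so you cannot pass to the supremum over $\rho$ needed for $f\ast\phi\in(L^{p\mu},\ell^{q\mu})^{\alpha\mu}(\mathbb R^d)$. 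Moreover the dilation relevant to the space $(L^{p\mu},\ell^{q\mu})^{\alpha\mu}$ is $\mathrm{St}^{\alpha\mu}_\rho$, which differs from $\mathrm{St}^{\alpha}_\rho$ by the factor $\rho^{\frac{d}{\alpha}-\frac{d}{\alpha\mu}}$, so ``the same commutation/supremum argument'' does not transfer the exponents as stated; invoking ``scaling invariance of the classes'' does not close this. The paper avoids the issue entirely: since $f\in\mathcal H^{(p,q,\alpha)}(\mathbb R^d)\subset\mathcal H^{(p,q)}(\mathbb R^d)$ is a bounded distribution, $f\ast\phi\in L^\infty(\mathbb R^d)$, and the pointwise interpolation inequality
$$\left\|f\ast\phi\right\|_{p\mu,q\mu,\alpha\mu}\leq C\left\|f\ast\phi\right\|_{\infty}^{1-\frac{1}{\mu}}\left\|f\ast\phi\right\|_{p,q,\alpha}^{\frac{1}{\mu}}$$
gives the required membership directly and uniformly in $\phi$. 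You should replace your scaling argument for this step by such an $L^\infty$-interpolation estimate.
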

\begin{proof}
Let $\frac{d-1}{d}<p\leq\alpha\leq q<\infty$ and  $f\in\mathcal{S'}(\mathbb{R}^{d})$.

We suppose that $f$ is $(p,q,\alpha)$-restricted at infinity and satisfies (\ref{cond2}) for non vanishing Schwartz function $\phi$. There exists $\mu_0>1$ (large enought) such that for $\mu>\mu_0$, we have 
\begin{equation}
f\ast\phi\in \left( L^{p\mu};\ell^{q\mu}\right) ^{\alpha\mu}(\mathbb R^d),\quad \phi\in\mathcal S(\mathbb R^d).
\end{equation}
It comes from the definition of Fofana spaces that 
$$\mathrm{St}^{\alpha\mu}_\rho\left( f\ast\phi\right) \in \left( L^{p\mu},\ell^{\mu q}\right) (\mathbb R^d)\quad \phi\in\mathcal S(\mathbb R^d), \quad \rho>0.$$
Taking $\rho=1$, we obtain that  $f$ is $(p,q)$-restricted at infinity. Since for all $\phi\in\mathcal S(\mathbb R^d)$ with non vanishing integral we also have that 
$$A=\sup_{t>0}\left( \sup_{\rho}\left\|\mathrm{St}^\alpha_\rho\left( f\ast \phi_{t}\right) \right\|_{p,q}+ \sum_{j=1}^{d} \sup_{\rho>0}\left\|\mathrm{St}^\alpha_\rho\left( (\mathcal R_{j}f)\ast\phi_{t}\right) \right\|_{p,q} \right) <\infty,$$
it follows that $$\sup_{t>0}\left(\left\| f\ast \phi_{t} \right\|_{p,q}+ \sum_{j=1}^{d} \left\| (\mathcal R_{j}f)\ast\phi_{t} \right\|_{p,q} \right) \leq A.$$
 Thus $f\in\mathcal H^{(p,q)}(\mathbb R^d)$ thanks to  \cite[Theorem 1.1]{AjCf}. It remains to prove that  the familly $\left\lbrace \mathrm{St}^\alpha_\rho f\right\rbrace _{\rho>0}$ is uniformly bounded in $\mathcal H^{(p,q)}(\mathbb R^d)$.

Fix $\rho>0$. We have $ \mathrm{St}^\alpha_\rho f\in  \mathcal H^{(p,q)}(\mathbb R^d)$ so that 
$$\Vert \mathrm{St}^\alpha_\rho f\Vert_{\mathcal H^{(p,q)}}\approx\sup_{t>0}\left( \left\|\mathrm{St}^\alpha_\rho( f)\ast \phi_{t} \right\|_{p,q}+ \sum_{j=1}^{d} \left\|\mathcal R_{j}(\mathrm{St}^\alpha_\rho f)\ast\phi_{t} \right\|_{p,q} \right)$$
thanks once more to  \cite[Theorem 1.1]{AjCf}. But we have in one hand that

 $\mathcal R_j(f)\ast\phi_t=\mathcal R_j\left( f\ast\phi_t\right) $, so that 
\begin{equation}
\mathrm{St}^\alpha_\rho\left[\left(  \mathcal R_jf\right) \ast\phi_t\right] =\mathrm{St}^\alpha_\rho\left[ \mathcal R_j\left( f \ast\phi_t\right)\right]  =\mathcal R_j\left[ \mathrm{St}^\alpha_\rho\left( f \ast\phi_t\right)\right],\label{aa}
\end{equation}
where the last equality comes from the fact that dilation comute with Riesz transforms. In the other hand   we have that \begin{equation}
\sup_{t>0}\Vert \mathrm{St}^\alpha_\rho\left( f \ast\phi_t\right)\Vert_{p,q}=\sup_{t>0}\Vert \mathrm{St}^\alpha_\rho\left( f \right) \ast\phi_t\Vert_{p,q},\label{bb}
\end{equation}
  thanks to Lemma \ref{dilconv}.  Therefore, we have 
$$\sup_{\rho>0}\sup_{t>0}\Vert \mathrm{St}^\alpha_\rho\left( f \right) \ast\phi_t\Vert_{p,q}=\sup_{\rho>0}\sup_{t>0}\Vert \mathrm{St}^\alpha_\rho\left( f \ast\phi_t\right)\Vert_{p,q}\leq A$$
 and 
 $$\sup_{\rho>0}\sup_{t>0}\sum_{j=1}^d\Vert \mathcal R_j\left[ \mathrm{St}^\alpha_\rho\left( f \ast\phi_t\right)\right]\Vert_{p,q}=\sup_{\rho>0}\sup_{t>0}\sum_{j=1}^d\Vert \mathrm{St}^\alpha_\rho\left[\left(  \mathcal R_jf\right) \ast\phi_t\right] \Vert_{p,q}\leq A.$$
We deduce that $\sup_{\rho>0}\Vert \mathrm{St}^\alpha_\rho f\Vert_{\mathcal H^{(p,q)}}<\infty$, which prove  that  $f\in\mathcal H^{(p,q,\alpha)}(\mathbb R^d)$. 

For the converse, 
we suppose that $f\in\mathcal H^{(p,q,\alpha)}(\mathbb R^d)$. It follows that $\mathrm{St}^\alpha_\rho f\in \mathcal H^{(p,q)}(\mathbb R^d)$ with $\Vert \mathrm{St}^{\alpha}_\rho f\Vert_{\mathcal H^{(p,q)}}\leq\Vert f\Vert_{\mathcal H^{(p,q,\alpha)}}<\infty$ for all $\rho>0$.  It comes from  \cite[Theorem 1.1]{AjCf} that $\mathrm{St}^{\alpha}_\rho f$ is $(p,q)$-resticted at infinity and 
$$\Vert \mathrm{St}^\alpha_\rho f\Vert_{\mathcal H^{(p,q)}}\approx\sup_{t>0}\left( \left\|\mathrm{St}^\alpha_\rho( f)\ast \phi_{t} \right\|_{p,q}+ \sum_{j=1}^{d} \left\|\mathrm{St}^\alpha_\rho ( \mathcal R_{j}f)\ast\phi_{t} \right\|_{p,q} \right)$$
for all $\phi\in\mathcal S(\mathbb R^d)$ with non vanish integral. 

From Relations (\ref{bb})  and (\ref{aa}), and the definitions of $\Vert\cdot\Vert_{p,q,\alpha}$ and of $\Vert\cdot\Vert_{\mathcal H^{(p,q,\alpha)}}$,  we have that 
$$ \left\|f\right\|_{\mathcal{H}^{(p,q,\alpha)}}  \approx \sup_{t>0}\bigg(\left\|f\ast \phi_{t}\right\|_{p,q,\alpha}+ \sum_{j=1}^{d} \left\|(\mathcal R_{j}f)\ast\phi_{t}\right\|_{p,q,\alpha} \bigg)<\infty.$$

 Let $\phi\in \mathcal{S}(\mathbb{R}^{d})$.  We have $\left\|f\ast \phi\right\|_{p,q,\alpha}\leq C\left\|f\right\|_{\mathcal{H}^{(p,q,\alpha)}}$. For $\mu\geq 1$  we have $f\ast \phi \in (L^{p\mu}, \ell^{q \mu} )^{\alpha \mu}$. In fact assuming that $\Vert f\ast \varphi\Vert_{\infty}\neq 0$  we have 
\begin{equation*}
f\ast \phi \in (L^{p}, \ell^{q} )^{\alpha}\; \; \text{and}\;\;
\left\|f\ast \phi\right\|_{p\mu,q\mu,\alpha\mu}\leq C \left\|f\ast \phi\right\|_{\infty}^{1-\frac{1}{\mu}} \left\|f\ast \phi\right\|_{p,q,\alpha}^{\frac{1}{\mu}}
\end{equation*}
and then $f$ is $(q,p,\alpha)$-restricted at infinity.
\end{proof}

\section{Temperature Cauchy-Riemann equations and Hardy-Fofana spaces}

A vector  $ F = (u_1, u_2, \cdots, u_ {d + 1}) $ of functions in $\mathbb R^{d+1}_+$ satisfy the generalized temperature Cauchy-Riemann equations, if it  satisfies the following conditions :
\begin{enumerate}
\item $\sum_{j=1}^{d}\frac{\partial u_j}{\partial x_j}=i\partial^{1/2}_t u_{d+1}$
\item  $\frac{\partial u_j}{\partial x_k}=\frac{\partial u_k}{\partial x_j}\text{ for } j,k=1,2,\cdots,d$
\item $\frac{\partial u_{d+1}}{\partial x_j}=-i \partial^{1/2}_t u_j, j=1,2,\cdots,d$,   
        with 
$$(\partial^{1/2}_t g)(t):=\frac{e^{i\pi/2}}{\sqrt{\pi}}\int_t^\infty\frac{g'(s)}{\sqrt{s-t}}ds, t>0$$
when $g$ is a smooth enough function on $\left( 0,\infty\right) $ 
\end{enumerate}

In \cite{AjCf}, the authors defined the space $\mathbb H^{p,q}(\mathbb R^{d+1}_+)$ ($0< p,q<\infty$) as the vector space of
 vector functions $ F = (u_1, u_2, \cdots, u_ {d + 1}) $ satisfying
  generalized temperature Cauchy-Riemann equations and such that 
$$\Vert F\Vert_{\mathbb H^{p,q}(\mathbb R^{d+1}_+)}:=\sup_{t>0}\Vert \vert F(\cdot,t)\vert\Vert_{p,q}<\infty.$$
They also proved that under appropriate conditions on the exponents $p$ and $q$, the space $\mathbb H^{p,q}(\mathbb R^{d+1}_+)$ is topologically isomorphic to $\mathcal H^{p,q}(\mathbb R^d)$. To carry out the proof of this result,  they use a subspace of what they call the temperature space $\mathcal T(\mathbb R^{d+1}_+)$; that is the space of functions $u\in\mathcal C^2(\mathbb R^{d+1}_+)$, satisfying 
$$\frac{\partial u}{\partial t}=\sum_{j=1}^{d}\frac{\partial^{2}u}{\partial x_{j}^{2}}\text{ in }\mathbb{R}_{+}^{d+1}.$$
More precisely, for  $0<p,q<\infty$, they put 
  $$\mathcal T^{p,q}(\mathbb R^{d+1}_+):=\left\lbrace u\in\mathcal T(\mathbb R^{d+1}_+):\vert\vert u\vert\vert_{\mathcal{T}^{(p,q)}}< \infty\right\rbrace $$
  where 
  $$\vert\vert u\vert\vert_{\mathcal{T}^{(p,q)}}:=\sup_{t>0}\vert\vert u(.,t)\vert\vert_{q,p}.$$
They proved \cite[Proposition 3.2 (i)]{AjCf}  that for $\frac{d-1}{d}<p,q<\infty$, $F=(u_1,u_2,\cdots,u_{d+1})\in \mathbb H^{p,q}(\mathbb R^{d+1}_+)$ implies that $u:=u_{d+1}\in\mathcal T^{p,q}(\mathbb R^{d+1}_+)$ and $u_j(\cdot,t)=\mathcal R_j(u(\cdot, t)),\quad t>0,\quad j=1,\cdots,d$. 

We claim that for $0<p\leq \alpha\leq q<\infty$ and $r>0$, the space $\mathcal T^{p,q}(\mathbb R^{d+1}_+)$ is stable under the dilation $\mathrm{St}^\alpha_r$. This is due to the fact that   for $f\in \left( L^p,\ell^q\right) (\mathbb R^d)$, there exists a constant $C(\alpha,r,p,q)>0$ such that 
$$C(\alpha,r,p,q)^{-1}\Vert f\Vert_{p,q}\leq \Vert \mathrm{St}^\alpha_r f\Vert_{p,q}\leq C(\alpha,r,p,q)\Vert f\Vert_{p,q},$$
and this dilation commute with Riesz transforms. It follows that if $F=(u_1,\cdots,u_{d+1})\in\mathbb H^{p,q}(\mathbb R^{d+1}_+)$ then  $\mathrm{St}^\alpha_r F\in\mathbb H^{p,q}(\mathbb R^{d+1}_+)$. 

We put 
$$\Vert F\Vert_{\mathbb H^{(p,q,\alpha)}}:=\sup_{r>0}\Vert \mathrm{St}^\alpha_r F\Vert_{\mathbb H^{p,q}(\mathbb R^{d+1}_+)}$$
and defined the space  $\mathbb H^{(p,q,\alpha)}(\mathbb R^{d+1}_+)$ as the subspace of $\mathbb H^{p,q}(\mathbb R^{d+1}_+)$ consits of $F$ satisfying $\Vert F\Vert_{\mathbb H^{(p,q,\alpha)}}<\infty$. We have the following result in  Hardy-Fofana spaces. 

\begin{thm}\label{yvo2}Let $\frac{d-1}{d} < p\leq \alpha \leq q < \infty,$ and $W_t$ the heat kernel defined by 
$$W_t(x)=\frac{e^{-\vert x\vert^2/4t}}{(4\pi t)^{d/2}}.$$
 The map $\mathcal L$ define on $\mathcal{H}^{(p,q,\alpha)}(\mathbb{R}^{d})$ by 
$$\mathcal{L}(f)(x,t):=\big(((\mathcal{R}_{1}f)\ast W_{t})(x),\cdots,((\mathcal{R}_{d}f)\ast W_{t})(x),(f\ast W_{t})(x)\big)$$for all $x\in\mathbb{R}^{d}$ and $t>0$, is a topological isomorphism from $\mathcal{H}^{(p,q,\alpha)}(\mathbb{R}^{d})$ onto $\mathbb{H}^{(p,q,\alpha)}(\mathbb{R}_{+}^{d+1}).$  \end{thm}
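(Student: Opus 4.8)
The plan is to reduce the statement to the corresponding result for Hardy-amalgam spaces, namely the isomorphism $\mathcal H^{p,q}(\mathbb R^d)\cong\mathbb H^{p,q}(\mathbb R^{d+1}_+)$ proved in \cite{AjCf}, and to transfer it through the family of dilations $\{\mathrm{St}^\alpha_\rho\}_{\rho>0}$, exactly as was done for the Poisson/Riesz characterization in Proposition \ref{Kdy} and Theorem \ref{yvo}. The key structural fact I would invoke is the dilation characterization \eqref{deltachar}, $\Vert f\Vert_{\mathcal H^{(p,q,\alpha)}}=\sup_{\rho>0}\Vert \mathrm{St}^\alpha_\rho f\Vert_{\mathcal H^{(p,q)}}$, together with its heat-kernel analogue, which is the definition $\Vert F\Vert_{\mathbb H^{(p,q,\alpha)}}=\sup_{\rho>0}\Vert \mathrm{St}^\alpha_\rho F\Vert_{\mathbb H^{p,q}(\mathbb R^{d+1}_+)}$ given just before the theorem. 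So the whole argument hinges on the compatibility of $\mathcal L$ with $\mathrm{St}^\alpha_\rho$, i.e. an identity of the shape
\begin{equation}
\mathrm{St}^\alpha_\rho\big(\mathcal L(f)\big)(x,t)=\mathcal L\big(\mathrm{St}^\alpha_\rho f\big)(x,\rho t),\qquad \rho>0,\ t>0,\label{dilL}
\end{equation}
which should follow from Lemma \ref{dilconv} applied with $\varphi$ replaced by the heat kernel $W$ (noting $W_t=W_{\sqrt t}\ast$-scaling aside, $\mathrm{St}^\alpha_\rho(f\ast W_t)=(\mathrm{St}^\alpha_\rho f)\ast W_{\rho t}$ works verbatim since the proof of Lemma \ref{dilconv} only used the scaling relation $\varphi_t(x)=t^{-d}\varphi(t^{-1}x)$, which $W$ also enjoys with the caveat that $W_t$ uses parameter $t$, not $t^{d}$ — I would state the correct scaling $W_{\rho^2 t}$ if necessary) together with the commutation of $\mathrm{St}^\alpha_\rho$ with each Riesz transform $\mathcal R_j$, already used in \eqref{aa}.

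\textbf{Step-by-step.} First I would establish that $\mathcal L$ is well-defined from $\mathcal H^{(p,q,\alpha)}(\mathbb R^d)$ into $\mathbb H^{(p,q,\alpha)}(\mathbb R^{d+1}_+)$. For $f\in\mathcal H^{(p,q,\alpha)}$, Proposition \ref{kady1} (or the Poisson characterization) and \cite[Corollary 4.19]{AbFe2} give $f, \mathcal R_j f\in\mathcal H^{(p,q)}(\mathbb R^d)$ with uniform control, and the classical fact (used in \cite{AjCf}) that $u:=u_{d+1}=f\ast W_t$ lies in $\mathcal T^{p,q}(\mathbb R^{d+1}_+)$ with $u_j(\cdot,t)=\mathcal R_j(u(\cdot,t))$ shows $\mathcal L(f)\in\mathbb H^{p,q}(\mathbb R^{d+1}_+)$; moreover $\mathcal L(f)$ verifies the generalized temperature Cauchy--Riemann equations because that is exactly \cite[Proposition 3.2]{AjCf} applied to the Hardy-amalgam element $f$. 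Next, using \eqref{dilL} and the fact that $\mathrm{St}^\alpha_\rho f\in\mathcal H^{(p,q)}$ for every $\rho>0$, I get $\mathrm{St}^\alpha_\rho(\mathcal L(f))=\mathcal L(\mathrm{St}^\alpha_\rho f)(\cdot,\rho\,\cdot)\in\mathbb H^{p,q}(\mathbb R^{d+1}_+)$, and since $\sup_{t>0}\Vert|\mathcal L(g)(\cdot,\rho t)|\Vert_{p,q}=\sup_{t>0}\Vert|\mathcal L(g)(\cdot,t)|\Vert_{p,q}$, the isomorphism $\mathcal H^{p,q}\cong\mathbb H^{p,q}$ of \cite{AjCf} yields
\[
\Vert \mathrm{St}^\alpha_\rho(\mathcal L(f))\Vert_{\mathbb H^{p,q}}\approx\Vert \mathrm{St}^\alpha_\rho f\Vert_{\mathcal H^{(p,q)}}
\]
with constants independent of $\rho$. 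Taking $\sup_{\rho>0}$ and applying \eqref{deltachar} and the definition of $\Vert\cdot\Vert_{\mathbb H^{(p,q,\alpha)}}$ gives $\Vert \mathcal L(f)\Vert_{\mathbb H^{(p,q,\alpha)}}\approx\Vert f\Vert_{\mathcal H^{(p,q,\alpha)}}$, so $\mathcal L$ is an injective bounded map with bounded inverse onto its image.

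\textbf{Surjectivity.} It remains to show every $F\in\mathbb H^{(p,q,\alpha)}(\mathbb R^{d+1}_+)$ is of the form $\mathcal L(f)$. Since $\mathbb H^{(p,q,\alpha)}\subset\mathbb H^{p,q}$, the isomorphism of \cite{AjCf} produces $f\in\mathcal H^{p,q}(\mathbb R^d)$ with $F=\mathcal L(f)$, $u_{d+1}=f\ast W_t$, $u_j=(\mathcal R_j f)\ast W_t$. One then argues, just as in the converse part of Theorem \ref{yvo}, that $\{\mathrm{St}^\alpha_\rho f\}_{\rho>0}$ is uniformly bounded in $\mathcal H^{p,q}$: indeed $\mathrm{St}^\alpha_\rho F=\mathcal L(\mathrm{St}^\alpha_\rho f)(\cdot,\rho\,\cdot)$ by \eqref{dilL}, so $\Vert \mathrm{St}^\alpha_\rho f\Vert_{\mathcal H^{p,q}}\approx\Vert \mathrm{St}^\alpha_\rho F\Vert_{\mathbb H^{p,q}}\leq\Vert F\Vert_{\mathbb H^{(p,q,\alpha)}}$, and \eqref{deltachar} then forces $f\in\mathcal H^{(p,q,\alpha)}(\mathbb R^d)$. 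One subtlety to handle here is the identification $\mathrm{St}^\alpha_\rho f$ as an honest distribution rather than an a priori different preimage: I would recover it exactly as in Theorem \ref{yvo}, from the fact that $g\ast W_t\to g$ in $\mathcal S'$ as $t\to 0^+$ for $g\in\mathcal H^{p,q}$, so that $(\mathrm{St}^\alpha_\rho f)\ast W_t = f_\rho\ast W_t$ for all $t$ implies $f_\rho=\mathrm{St}^\alpha_\rho f$.

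\textbf{Main obstacle.} The routine parts are the two norm equivalences; the delicate point is verifying the dilation-intertwining identity \eqref{dilL} with the heat kernel and getting the $t$-versus-$\rho t$ (or $\rho^2 t$) bookkeeping right, since the heat semigroup is parametrized by $t$ rather than by a length scale, so the scaling $W_t(x)=t^{-d/2}W_1(t^{-1/2}x)$ does not have the form $\varphi_t(x)=t^{-d}\varphi(t^{-1}x)$ used in Lemma \ref{dilconv}. I would therefore prove the heat-kernel analogue of Lemma \ref{dilconv} directly, tracking that $\mathrm{St}^\alpha_\rho(f\ast W_t)=(\mathrm{St}^\alpha_\rho f)\ast W_{\rho^2 t}$, and then observe that the substitution $t\mapsto\rho^2 t$ is harmless in $\sup_{t>0}$, which is all that is needed for the two norm equivalences to go through; the commutation of $\mathrm{St}^\alpha_\rho$ with $\mathcal R_j$ and with the temperature Cauchy--Riemann system is then immediate since both are dilation-invariant.
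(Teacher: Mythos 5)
Your proposal follows essentially the same route as the paper: transfer the Hardy-amalgam isomorphism of \cite{AjCf} through the dilation family $\left\lbrace \mathrm{St}^\alpha_\rho\right\rbrace_{\rho>0}$, using the intertwining of $\mathcal L$ with $\mathrm{St}^\alpha_\rho$ together with the $\sup_{\rho>0}$ definitions of both quasi-norms, and recovering $f_\rho=\mathrm{St}^\alpha_\rho f$ from the convergence $g\ast W_t\to g$ in $\mathcal S'$. You are in fact more careful than the paper on one point: the paper asserts $\mathcal L(\mathrm{St}^\alpha_r f)=\mathrm{St}^\alpha_r(\mathcal L(f))$ outright, whereas the correct identity carries the time reparametrization $t\mapsto r^2 t$ that you track explicitly, which is indeed harmless under the $\sup_{t>0}$.
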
 

\begin{proof}Let $f\in\mathcal{H}^{(p,q,\alpha)}(\mathbb R^d)$.  For $r>0$ we have $\mathrm{St}^\alpha_rf\in  \mathcal H^{p,q}(\mathbb R^d),$ thanks to the definition of  $\mathcal{H}^{(p,q,\alpha)}(\mathbb R^d)$.  It comes from \cite[Theorem 1.3]{AjCf} that 
\begin{equation}\mathcal{L}(St^\alpha_r f)\in\mathbb{H}^{p,q}(\mathbb{R}_{+}^{d+1}),\text{  with  }
\Vert \mathcal L(\mathrm{St}^\alpha_r f)\Vert_{\mathbb H^{p,q}(\mathbb R^{d+1}_+)}\leq C \Vert \mathrm{St}^\alpha_r f\Vert _{\mathcal H^{p,q}(\mathbb R^d)}\label{cont1}
\end{equation}
for all $r>0$. Since $\mathcal{R}_{j}(f\ast W_{t})=\mathcal{R}_{j}(f)\ast W_{t}$ and $\mathrm{St}^\alpha_r$ commute with Riesz transforms, we have that $\mathcal{L}(St^\alpha_r f)=St^\alpha_r(\mathcal{L}( f))$, $r>0$. Thaking this remark in (\ref{cont1}) we obtain that 
$$\Vert \mathcal L( f)\Vert_{\mathbb H^{(p,q,\alpha)}(\mathbb R^{d+1}_+)}\leq C \Vert  f\Vert _{\mathcal H^{(p,q,\alpha)}(\mathbb R^d)}.$$

Let now $F=(u_1, u_2,\cdots,u_d,u_{d+1})$ belonging to  $\mathbb H^{(p,q,\alpha)}(\mathbb R^{d+1}_+)$. This implies that  $\mathrm{St}^\alpha_rF=(\mathrm{St}^\alpha_r u_1, \mathrm{St}^\alpha_r u_2,\cdots,\mathrm{St}^\alpha_r u_d,\mathrm{St}^\alpha_r u_{d+1})\in\mathbb H^{p,q}(\mathbb R^{d+1}_+)$ for all $r>0$. As we can see in the proof of \cite[Theorem 3.1]{AjCf} this implies that for all $r>0$, there exists $f_r\in\mathcal H^{p,q}(\mathbb R^d)$ so that 
$\mathrm{St}^\alpha_r u_{d+1}\in\mathcal T^{p,q}(\mathbb R^{d+1}_+)$  with $\mathrm{St}^\alpha_r u_{d+1}(x,t)=f_r\ast W_t(x)$ and 
\begin{equation}\Vert  f_r\Vert_{\mathcal H^{p,q}}\leq C\sup_{t>0}\Vert \mathrm{St }^\alpha_rF(\cdot,t)\Vert_{p,q}\leq C\Vert F\Vert_{\mathbb H^{(p,q,\alpha)}},\label{contr}
\end{equation}
and $\mathrm{St}^\alpha_r u_j(\cdot,t)=\mathcal R_j(\mathrm{St}^\alpha_r u_{d+1}(\cdot,t))$, $t>0$, $j=1,\cdots,d.$

We put $f:=f^1$. 
We have $\mathrm{St }^\alpha_r f=f_r$ for all  $r>0$. Taking this in estimate (\ref{contr}) yields 
$$\Vert \mathrm{St }^\alpha_r f\Vert_{\mathcal H^{p,q}}\leq C\Vert F\Vert_{\mathbb H^{(p,q,\alpha)}}$$
wich prove that $f\in \mathcal H^{(p,q,\alpha)}(\mathbb R^d)$.

The vector $$G(x,t)=((\mathcal R_1(f)\ast P_t)(x),\cdots,\mathcal R_d(f)\ast P_t)(x),(f\ast P_t)(x)),\quad x\in\mathbb R^d,t>0$$
is harmonic, satisfies the generalized Cauchy-Riemann equation, and 
$$\sup_{t>0}\Vert \vert G(\cdot,t)\vert\Vert_{p,q,\alpha}\leq C\Vert F\Vert_{\mathbb H^{(p,q,\alpha)}}.$$

\end{proof}

\end{document}